\newtheorem{Theorem}{Theorem}[section]
\newtheorem{Lemma}{Lemma}[section]
\newtheorem{Proposition}[Lemma]{Proposition}
\newtheorem{Corollary}[Lemma]{Corollary}
\newtheorem{Remark}{Remark}[section]
\newenvironment{proof}{\paragraph*{Proof}}{\hfill$\square$}
\newcommand{\ovec}{\overrightarrow}
\date{}
\begin{document}

\begin{center}
{\large \bf Almost Lyapunov Functions for Nonlinear Systems}
\end{center}

\centerline{\scshape Shenyu Liu
}
\medskip
{\footnotesize
 \centerline{Coordinated Science Laboratory}
\centerline{University of Illinois at Urbana-Champaign}
\centerline{Urbana, IL 61801, USA}
}

\medskip

\centerline{\scshape Daniel Liberzon}
\medskip
{\footnotesize
 \centerline{Coordinated Science Laboratory}
\centerline{University of Illinois at Urbana-Champaign}
\centerline{Urbana, IL 61801, USA}
}

\medskip

\centerline{\scshape Vadim Zharnitsky }

\medskip
{\footnotesize
\centerline{ (corresponding author, email: {\tt vzh@uiuc.edu})}
 \centerline{Department of Mathematics }
 \centerline{University of Illinois at Urbana-Champaign}
 \centerline{Urbana, IL 61801, USA}
 \centerline{and}
\centerline{ZJU-UIUC Institute, International Campus}
\centerline{Zhejiang University}
\centerline{ Haining, China}
}

\vspace{20mm}

{\bf Keywords:} Stability, nonlinear systems, Lyapunov functions.  \\

\begin{abstract}

 We study convergence  of nonlinear systems in the presence of an ``almost Lyapunov'' function which, unlike the classical Lyapunov function, is allowed to be nondecreasing---and even increasing---on a nontrivial subset  of the phase space.  Under the assumption that the vector field is free of  singular points (away from the origin) and that the subset where the Lyapunov function does not decrease is sufficiently small, we prove that solutions approach a small neighborhood of the origin. A nontrivial  example where this theorem applies is constructed.

\end{abstract}

\section{Introduction}\label{introduction}

For general nonlinear systems, asymptotic stability is typically shown through Lyapunov's direct method (see, e.g., \cite{KH02}), which involves constructing a Lyapunov function $V$ whose time derivative along solutions is negative except at the equilibrium. Even if this property holds for the nominal system, stability is not guaranteed when there is a perturbation because $V$ might not necessarily decrease along solutions of the perturbed system. One natural way to address this issue is to find another Lyapunov function $W$ for this perturbed system by perturbing $V$ accordingly; this is known as the Zubov method \cite{RD65} on which there are many recent results such as \cite{FC01},\cite{SD02}. On the other hand, if
it is desirable to  use the same candidate Lyapunov function $V$, one may hope to establish stability, at least in some weaker sense, if the measure of the set  where $V$ is not decreasing along perturbed solutions is relatively small. We call such a candidate Lyapunov function ``{\em almost Lyapunov}" in this paper. \\

Besides the above applications for stability of perturbed systems, almost Lyapunov functions can be useful when computational complexity is the main difficulty. While it is straightforward  to compute the derivative of an arbitrary Lyapunov function along solutions, it might be quite challenging to analytically check the sign of this derivative either for all states, or just for a region of interest. For example,  in the case when both the differential equation  and the Lyapunov function are polynomials of high degree, the derivative is also a polynomial and verifying stability reduces to checking whether a polynomial is negative definite. This problem is computationally hard, as it is related to Hilbert's 17th problem \cite{BR00} and is an important subject of current research (see, e.g., \cite{GC11},\cite{GB12}). Following existing techniques, we may be able to verify that the time derivative of $V$ is negative only in a proper subset of the region of interest, while  not in the entire region. This demonstrates the  need for tools that would let one conclude stability if $V$ is only an ``almost Lyapunov" function, which is studied in this paper. \\

When a general candidate Lyapunov function is constructed, the sign of its derivative along solutions can also be checked by techniques based on random sampling \cite{RT12} instead of deterministic methods. This approach only requires one to verify that the derivative is negative at a sequence of states picked randomly inside the region. One can use the Chernoff bound (see, e.g., \cite{RT12},\cite{MV02}) to characterize the number of such sample points needed to obtain a reliable upper bound on the relative measure of points in the region of interest for which the desired inequality can possibly fail. Hence the problem is again converted into finding an ``almost Lyapunov" function.
There is not much work related to this topic of ``almost Lyapunov" functions. Before our preliminary work \cite{DL14} and \cite{SL16}, the most relevant work is \cite{AB69} and its extension \cite{AA11}, both of which use higher order derivatives of Lyapunov functions for stability analysis. Nevertheless, although a relatively small measure of the set of states where the Lyapunov function does not decrease is implied in both papers, none of them explicitly uses this fact.  \\

When working with ``almost Lyapunov" functions, we encounter  regions in the state space where the system trajectories might temporarily diverge (in the sense of growth of Lyapunov function). Nevertheless, our main result shows that when the volume of the "bad" region where $V$ does not decrease fast  is sufficiently small, the system is stable in the following weaker sense as characterized by three properties: 1. Every solution starting within a region that is slightly smaller than the region of interest will remain in the region of interest; 2. All such solutions will converge to a small region containing the equilibrium, with a uniform  bound in time; 3. Once they reach this small region around the equilibrium, solutions  will remain  there afterwards. The differences between the sizes of the respective regions depend on the measure of the  bad set, and they compensate for possible temporary overshoots.

The first result of this type was obtained in~\cite{DL14} by using a perturbation argument. In that paper, an arbitrary solution was compared with a solution that avoided "bad regions" and converged to the equilibrium. Then, using continuous dependence of solutions on initial conditions, it was found that this arbitrary solution will not end up too far from the equilibrium.

In this paper we present a different approach, which is based on the geometry of curves in the Euclidean space. The basic idea here (following up on our preliminary work~\cite{SL16}) is that in order to accumulate a net gain in $V$ along a solution, the tubular neighborhood swept out by a ball of a certain radius moving along this solution trajectory needs to be contained inside the region where $V$ does not decrease fast enough. Consequently, if such ``bad'' regions are not big enough, $V$ cannot increase overall (even though a temporary gain is still possible). To illustrate this type of system behavior, we construct an example in which  there is a small region where the time derivative of $V$ is positive and to which our main result applies.

The paper is mainly organized in the following order: Frequently used terms and variables are defined in Section \ref{preliminaries}. Our main result (Theorem \ref{thm1}) is stated in Section \ref{mainresult}. Its proof is given in Section \ref{proofoftheorem}. Section \ref{sec:GUAS} presents a global result on system stability which can be derived from almost Lyapunov function and Section \ref{nontrivialexample} contains a numerical example where our theorem is applied on with some discussion. After Section \ref{conclusion} concludes the paper, the previous result from \cite{DL14} is briefly mentioned in Appendix A and the proof of an auxiliary result (Proposition \ref{non-overlap}) is provided in Appendix B.

\section{Preliminaries}\label{preliminaries}
Consider a general system
\begin{equation}\label{def:sys}
\dot{x}=f(x),\quad x\in\mathbb{R}^n,\quad f(0) =0,
\end{equation}
where $f:\mathbb R^n\to\mathbb R^n$ is a Lipschitz function. Consider a function $V:\mathbb{R}^n\to[0,\infty)$ which is positive definite and $C^1$ with locally Lipschitz gradient, which we denote by $V_x$. We say it is a \textit{Lyapunov function} for the system \eqref{def:sys} if \begin{equation}\label{ldm}
\dot{V}(x):=V_x(x)\cdot f(x)<0\quad \forall x\neq 0
\end{equation}
The system \eqref{def:sys} can be shown to be asymptotically stable if such a Lyapunov function exists \cite[Ch.~4]{KH02}. A stronger version of Lyapunov function is when $V$ decays at a certain positive rate $a$:
\[
\dot{V}(x) < -aV(x)\quad\forall x\neq 0
\]
While this property needs not to hold on the entirely region of interest $D$, we set
\begin{equation}\label{lcvdot}
\Omega:=\{x\in D:\dot V(x)\geq -aV(x)\}
\end{equation}
and when the measure of $\Omega$ is ``small", we informally say this $V$ is an almost Lyapunov function for the system \eqref{def:sys} because now
\[
\dot{V}(x)< -aV(x)\quad \forall x\in D\backslash\Omega.
\]
Notice that the solution trajectory passing through $\Omega$ does not necessarily imply  growth of $V$; it is only in the subset   $\{ x\in \Omega: \dot {V}(x)> 0 \}$ that growth of $V$  occurs. In this paper, we take the region $D$ to be of the following form:
\begin{equation}\label{def:D}
D:=\{x\in\mathbb R^n:c_1\leq V(x)\leq c_2\}, \quad c_2>c_1>0
\end{equation}
We assume $D$ to be compact\footnote{This is true when $V$ is radially unbounded. Otherwise the results of our theorem are still applicable if the initial state of the system is inside a compact connected component of $D$. In this case we take this compact connected component as the region of interest $D$.
}. We refer to $f$ as ``non-vanishing'' when
\begin{equation}\label{nonvanishing}
f(x)\neq 0\quad\forall x\in D.
\end{equation}
The non-vanishing condition  clearly requires the equilibrium at origin to be excluded from $D$. Next define
\begin{equation}\label{def:b}
b:=\max_{x\in D} \dot V(x).
\end{equation}
Finally, let $B_{\gamma}^n(x)$ be the closed ball whose center is at $x$ in $\mathbb{R}^n$ with radius $\gamma$. Also define the function
$\text{vol}(\cdot)$ to be the standard volume function induced by the Euclidean metric. Recall that a general expression for the volume of a $n$-dimensional ball of radius $\gamma$ is:
\begin{equation}\label{eqn:volume}
\text{vol}(B_{\gamma}^{n})=\frac{\pi^{\frac{n}{2}}}{\Gamma(\frac{n}{2}+1)}\gamma^{n}=:\chi(n)\gamma^n
\end{equation}
where $\Gamma$ is the standard gamma function \cite[Ch.~4.11]{CR89}. More  notations will be introduced in the course of the proof.

\section{Main result}\label{mainresult}

We are now ready to state our main result:

\begin{Theorem}\label{thm1}
Consider a system (\ref{def:sys}) with a locally Lipschitz right-hand side $f$, and a function $V:\mathbb R^n\to[0,+\infty)$ which is positive definite and $C^1$ with locally Lipschitz gradient. Let the region $D$ be defined via (\ref{def:D}) with some fixed  $c_1 < c_2$ and assume it is compact.

Let  $\dot{V}(x)< -aV(x)\ \forall\, x\in D\backslash\Omega$   for some $a>0$, where  $\Omega\subset D$ is a measurable set, let $\dot V(x) \geq -aV(x)\ \forall\, x \in \Omega$, and let $f$ be non-vanishing in $D$ as defined in (\ref{nonvanishing}).
Assume
\[
{\max_{x\in D} \dot V(x)} < a \min_{x\in D} V,
\]
i.e. $b < a c_1$ where $b$ is defined in \eqref{def:b}.

Then there exist constants $\bar\epsilon>0$, $g>0$, $h>0$ such that  for any $\epsilon\in[0,\bar\epsilon)$,  if  $\textnormal{vol}(\Omega^*)\leq\epsilon$ for every connected component $\Omega^*$ of $\Omega$ , then there exists $T\geq 0$ so that for any i.c. $x_0\in D$ with $V(x_0)< c_2-h\epsilon^{\frac{1}{n}}-g\epsilon$, the solution $x(t)$ of (\ref{def:sys})
stays in the domain $D$ for all time, i.e. $x(t)\in D$ for all $t\geq 0$,  and
\[
V(x(t))\leq c_1+h\epsilon^{\frac{1}{n}}+g\epsilon
\]
 for  all $ t\geq T$.

\end{Theorem}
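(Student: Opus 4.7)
The plan is to track the Lyapunov function $V$ along any solution of \eqref{def:sys}, exploiting exponential decay at rate $a$ on the complement of $\Omega$ together with a sharp geometric bound on the time the solution can spend in any one component $\Omega^*$. The central ingredient is a tubular-neighborhood volume estimate showing that no trajectory can gain more than $O(\epsilon^{1/n})$ in $V$ during any single excursion through $\Omega$.

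First I would collect the quantitative data available from compactness of $D$ and the hypotheses. The non-vanishing condition and continuity of $f$ give $0<f_{\min}\leq|f|\leq f_{\max}$ on $D$, while the local Lipschitz regularity of $f$ and $V_x$ makes $\dot V+aV$ Lipschitz on $D$ with some computable constant $K$. The two basic ODE estimates follow immediately: on $D\setminus\Omega$, $V(x(t))\leq V(x(s))\,e^{-a(t-s)}$; on $\Omega$, $V(x(t))\leq V(x(s))+b(t-s)$. The Euclidean arc length $L$ of a segment of duration $\tau$ is linked to $\tau$ by $f_{\min}\tau\leq L\leq f_{\max}\tau$, so any bound on arc length inside $\Omega^*$ translates to a bound on sojourn time, hence on overshoot.

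The heart of the proof is the sojourn-time estimate. Fix a trajectory segment $\gamma^*=x([t_1,t_2])\subset\Omega^*$ of arc length $L^*$ and consider the closed tubular neighborhood $T_r(\gamma^*)=\bigcup_{t\in[t_1,t_2]}B_r^n(x(t))$. The key claim is that for $r$ of order $\epsilon^{1/n}$ one has $T_r(\gamma^*)\subset\Omega^*$: Lipschitz continuity gives $(\dot V+aV)(y)\geq(\dot V+aV)(x(t))-Kr$ for every $y\in B_r^n(x(t))$, and by restricting attention to excursions whose trajectory stays sufficiently deep in $\Omega^*$ (away from the boundary $\{\dot V+aV=0\}$) one obtains $y\in\Omega$; connectedness of $T_r(\gamma^*)$ and of the components of $\Omega$ then forces $y\in\Omega^*$. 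Appealing to the non-overlap Proposition~\ref{non-overlap} from Appendix~B,
\[
c_n L^* r^{n-1}\leq\mathrm{vol}(T_r(\gamma^*))\leq\mathrm{vol}(\Omega^*)\leq\epsilon,
\]
so $\tau^*=t_2-t_1\leq L^*/f_{\min}\leq\epsilon/(c_n f_{\min}r^{n-1})$ and the $V$-overshoot on this excursion is at most $b\tau^*=O(\epsilon/r^{n-1})$. Choosing $r\asymp\epsilon^{1/n}$ optimizes this to an $O(\epsilon^{1/n})$ overshoot, which is the source of the $h\epsilon^{1/n}$ term.

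To conclude, I would chain the per-excursion bound with the exponential decay on complementary segments. Each excursion through $\Omega$ contributes at most $O(\epsilon^{1/n})$ to $V$, and the following segment in $D\setminus\Omega$ forces $V$ back down by at least an exponential factor, so successive overshoots cannot accumulate. Together these yield both forward invariance of $D$ for initial data $V(x_0)<c_2-h\epsilon^{1/n}-g\epsilon$ and convergence in finite time into $\{V\leq c_1+h\epsilon^{1/n}+g\epsilon\}$, once $h,g$ are picked large enough; the strict inequality $b<ac_1$ is precisely what guarantees that once inside this terminal level no further excursion can eject the trajectory. The lower-order $g\epsilon$ term will arise from the Lipschitz slack in the evolution estimate and from cross-section/curvature corrections hidden inside the tubular volume formula. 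The principal technical obstacle I expect is the rigorous justification of $T_r(\gamma^*)\subset\Omega^*$ in a way that handles trajectories grazing the boundary $\partial\Omega^*$; the phrasing of the auxiliary Proposition~\ref{non-overlap} suggests the authors carry out the sweeping not with a rigid Euclidean ball but with the flow-image of a ball, which absorbs this boundary difficulty more naturally.
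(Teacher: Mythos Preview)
Your proposal captures the tubular-volume heuristic but misses the two ideas that actually make the argument close, and as a consequence you have mis-assigned the roles of the $h\epsilon^{1/n}$ and $g\epsilon$ terms.

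\medskip
\textbf{Fixed tube radius via the nested sets $\Omega_\eta$.} You try to fit a tube of radius $r\asymp\epsilon^{1/n}$ around the trajectory inside $\Omega^*$ itself, and you correctly flag the boundary difficulty: near $\partial\Omega^*$ one has $\dot V+aV\approx 0$, so no positive $r$ works there. The paper does not resolve this with flow images of balls. Instead it introduces, for $\eta\in(0,1)$, the larger-threshold set $\Omega_\eta=\{x:\dot V(x)\ge -\eta aV(x)\}$ and tracks the excursion in $\Omega_\eta$, not in $\Omega_1=\Omega^*$. On $\Omega_\eta$ one has the \emph{fixed} margin $\dot V+aV\ge(1-\eta)aV\ge(1-\eta)ac_1>0$, so Lipschitzness of $\dot V$ yields a ball of \emph{fixed} radius $\gamma_\eta$ (independent of $\epsilon$) around every point of the trajectory that is still contained in $\Omega_1$. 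The sweeping cross-section is therefore $\mathrm{vol}(B^{n-1}_{\gamma_\eta})$, a constant, and the sojourn bound becomes $t-s\le g\epsilon/b$ with overshoot at most $b(t-s)\le g\epsilon$. Thus the overshoot is $O(\epsilon)$, not $O(\epsilon^{1/n})$; the $h\epsilon^{1/n}$ in the statement is a separate buffer ensuring the $\gamma_\eta$-tube does not protrude from $D$ (it defines the shrunk domain $D^*$), and has nothing to do with overshoot.

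\medskip
\textbf{Net decrease over a complete excursion.} Your chaining argument (``each excursion adds $O(\epsilon^{1/n})$, the next good segment removes an exponential factor'') does not close: the time between consecutive excursions can be arbitrarily short, so overshoots can accumulate without bound. The paper instead proves that every complete excursion through $\Omega_\eta$ (both endpoints on $\partial\Omega_\eta$) has \emph{strictly negative} net change in $V$. The mechanism is a trapezoidal estimate: at the endpoints $\dot V=-\eta aV\le-\eta ac_1$, the map $t\mapsto\dot V(x(t))$ is Lipschitz with constant $\alpha\bar L_0$, and $\dot V\le b$ everywhere; integrating the pointwise minimum of the three resulting affine caps gives the function $\phi(t-s)$, which is negative precisely because $b<\eta ac_1$. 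This is where the hypothesis $b<ac_1$ is actually consumed, not merely at the terminal level as you suggest. With $\Delta V<0$ on every interior excursion, one obtains a genuine exponential rate $\lambda(\epsilon)>0$ on $[0,T]$, and the $g\epsilon/2$ half-overshoots appear only at the two boundary excursions containing $t=0$ and $t=T$.
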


\begin{Remark}
The results of Theorem \ref{thm1} are illustrated in Figure \ref{illustration_fig}. As seen from the figure, the proof will actually give slightly sharper estimates than what is stated in the theorem, namely, $x(t) \leq V(x_0)+g\epsilon$ for all $t\geq 0$ and $V(x(T))\leq c_1+h\epsilon^{\frac{1}{n}}$.
The term  $ h\epsilon^{\frac{1}{n}}$ serves as a  ``buffer"  ensuring that  the  solution tube is always in $D$ while the term  $g\epsilon$ is a threshold for possible transient overshoot. The exact formulas for $g,h$ will be given by \eqref{g},\eqref{h} respectively and $\bar\epsilon$ will be explicitly found in Section~\ref{sec:4.3}. Later in the proof of the main theorem the reader will also see that the convergence before time $T$ is in fact exponential, in the form of
\begin{equation*}
V(x(t)) \leq (V(x(0))+ \frac{g}{2}\epsilon) e^{-\lambda(\epsilon)t^*}+\frac{g}{2}\epsilon,
\end{equation*}
where $\lambda(\epsilon)$ is a positive, continuous and strictly decreasing function on $[0,\bar\epsilon)$ with $\lambda(0)<a$ and some $t^*\in[\max\{0,t-\frac{2g\epsilon}{b}\},t]$.
\end{Remark}
\begin{Remark}
In the limit $\epsilon\rightarrow 0$, the almost Lyapunov function becomes the standard Lyapunov function and the theorem gives the usual conclusion that one could expect from the  Lyapunov stability theory. In particular, any solution starting at the higher  level set $V=c_2$ will converge to the lower level set $V=c_1$.
\end{Remark}

\begin{figure}
\centering
\includegraphics[scale=0.5]{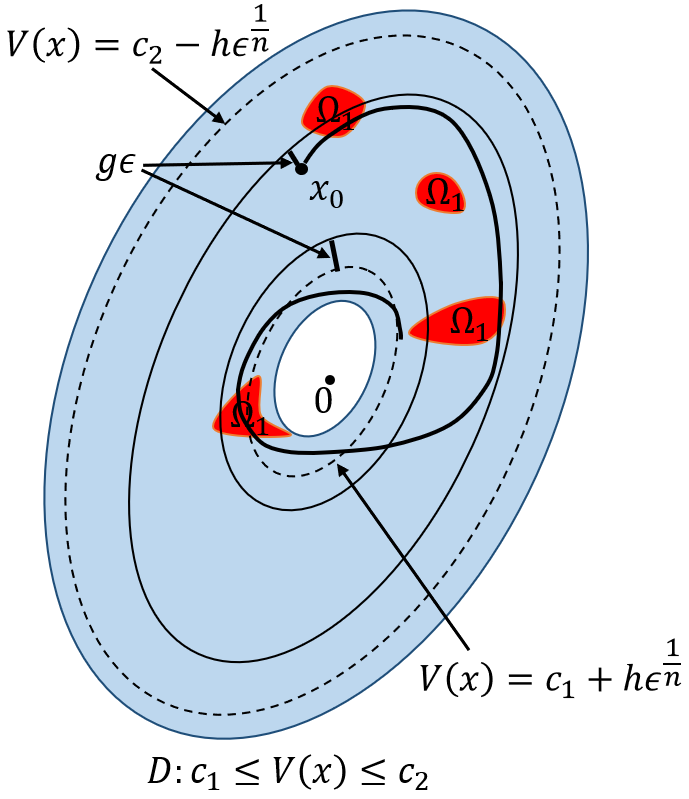}
\caption{Illustration of Theorem \ref{thm1}}\label{illustration_fig}
\end{figure}

\section{Proof of theorem}\label{proofoftheorem}

 The main idea of the proof  relies on the following observation: if the measure of  $\Omega$ is small enough, there will be too little time for a tube around the   solution  to stay inside $\Omega$ so the growth of $V$ could not be accumulated. The proof contains 4 major steps: 
\begin{enumerate}
\item The first step is to show that when the time derivative of $V$ is positive, the solution has to be in  a connected component  $\Omega^*$ and a tube around the solution is contained in $\Omega^*$.
\item The second step is to use a non-self-overlapping condition to compute an upper bound on the time that the solution stays in $\Omega^*$ based on the volume swept out by the solution tube.
\item  The next step is to find a bound on the change of $V$ over the time estimated in the previous step. We will   conclude that when the volume of $\Omega^*$ is sufficiently  small, the change of $V$ will be negative.
\item The last step generalizes previously obtained estimates to the possible scenario of repeated passage of the solution  through several, or even  infinitely many,  connected components of $\Omega$. By connecting segments of the solution, we argue that although there might be temporary overshoots in $V$, overall the solution will converge to a smaller sub-level set.
\end{enumerate}

\subsection{Estimates on the solution tube}
Since $f$ is a Lipschitz function and $D$ is compact, we can define the following bounds:
\begin{equation}\label{bar_L0}
\bar{L}_0:=\max_{x\in D}|f(x)|,
\end{equation}
\begin{equation}\label{underline_L0}
\underline{L}_0:=\min_{x\in D}|f(x)|.
\end{equation}
Note that the vector field $f$ is non-vanishing in $D$ if and only if $\underline{L}_0>0$. Let $L_1$ be the Lipschitz constant of $f$ over $D$:
\begin{equation}\label{L_1}
|f(x_1)-f(x_2)|\leq L_1|x_1-x_2|\quad\forall x_1,x_2\in D.
\end{equation}
In addition, since $V$ is assumed to be $C^1$ and has  locally Lipschitz gradient, we also define some bounds on $V_x$:
\begin{equation}\label{M_1}
M_1:=\max_{x\in D}|V_x(x)|,
\end{equation}
and $M_2$ be the Lipschitz constant of $V_x$ over $D$:
\begin{equation}\label{M_2}
|V_x(x_1)-V_x(x_2)|\leq M_2|x_1-x_2|\quad\forall x_1,x_2\in D.
\end{equation}
For $\eta\in[0,1]$, we pick a connected component from the following set
\begin{equation}\label{Omega_eta}
\{x\in D:\dot{V}(x)\geq-\eta a V(x)\}
\end{equation}
where $a$ comes from the hypothesis of the theorem. We call such a connected set $\Omega_\eta$. By this definition, $\Omega_1$ is the same as $\Omega^*$, a connected component of $\Omega$. By choosing an appropriate family of  connected components, it is possible to achieve that if
$\eta_1\leq \eta_2$ then $\Omega_{\eta_1}\subseteq\Omega_{\eta_2}$. \\
The next three lemmas establish existence of  a disk of positive  radius that is sweeping through $\Omega_1$ along the solution forming a tube that is contained  inside $\Omega_\eta$:
\begin{Lemma}\label{lem:0}
For any $x_1,x_2\in D$,
\[
|V(x_1)-V(x_2)|\leq M_1|x_1-x_2|.
\]
\end{Lemma}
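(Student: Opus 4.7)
The plan is a direct application of the fundamental theorem of calculus along the line segment connecting $x_1$ and $x_2$. I would parametrize this segment by $\gamma(t) := (1-t)\,x_2 + t\,x_1$ for $t\in[0,1]$ and set $\phi(t) := V(\gamma(t))$. Since $V$ is $C^1$ on $\mathbb{R}^n$, the chain rule gives
$$\phi'(t) = V_x(\gamma(t))\cdot \gamma'(t) = V_x(\gamma(t))\cdot(x_1-x_2),$$
which is continuous on $[0,1]$.

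By the fundamental theorem of calculus,
$$V(x_1) - V(x_2) \;=\; \phi(1)-\phi(0) \;=\; \int_0^1 V_x(\gamma(t))\cdot(x_1-x_2)\,dt.$$
Taking absolute values and applying the Cauchy--Schwarz inequality pointwise inside the integral yields
$$|V(x_1)-V(x_2)| \;\leq\; |x_1-x_2|\int_0^1 |V_x(\gamma(t))|\,dt \;\leq\; M_1\,|x_1-x_2|,$$
where the last step uses the bound $|V_x(x)|\leq M_1$ supplied by the definition \eqref{M_1}.

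The only subtlety worth flagging is that, since $D$ is given by a sublevel-set difference in \eqref{def:D}, it need not be convex, so the segment $\gamma([0,1])$ is not guaranteed to lie entirely inside $D$; a priori the pointwise bound on $|V_x|$ along the segment is therefore not immediate from \eqref{M_1}. However, $V$ is globally $C^1$ so $|V_x|$ is bounded on any compact enlargement of $D$, and more importantly the lemma will be applied in the sequel to pairs of nearby points on the solution tube inside $D$, where the connecting segment does stay in $D$. Under that interpretation the estimate with constant $M_1$ is exactly what is needed. I do not foresee any serious obstacle: the statement is essentially the standard Lipschitz estimate for a $C^1$ function with bounded gradient.
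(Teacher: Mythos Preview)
Your argument for the case when the segment $[x_1,x_2]$ lies in $D$ is fine and matches the paper's use of the mean value theorem. The gap is exactly the ``subtlety'' you flag and then set aside: the lemma is stated for \emph{all} $x_1,x_2\in D$, and $M_1$ is defined as the maximum of $|V_x|$ over $D$ only, so invoking a bound on some compact enlargement would change the constant, and appealing to how the lemma is used downstream does not prove the lemma as written. This is not merely cosmetic: Lemma~\ref{cor:5} applies Lemma~\ref{lem:0} to points $x\in D^*$ and $y\in\partial D$ whose connecting segment can exit $D$, and the precise constant $M_1$ enters the definition of $h$ in~\eqref{h}.

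The paper closes this gap with a short structural observation rather than an enlargement. If the segment leaves $D$, take consecutive boundary crossings $y_1,y_2\in\partial D$ with the sub-segment $(y_1,y_2)$ outside $D$. Since $V(y_i)\in\{c_1,c_2\}$ and $V$ is continuous along the segment while every interior point of $(y_1,y_2)$ has $V<c_1$ or $V>c_2$, the intermediate value theorem forces $V(y_1)=V(y_2)$. Hence the excursion contributes nothing to $V(x_1)-V(x_2)$, and one applies the in-$D$ estimate on the remaining sub-segments, whose total length is at most $|x_1-x_2|$. Adding this argument (or restricting the lemma's statement to pairs whose segment stays in $D$ and checking that suffices everywhere it is invoked) would make your proof complete.
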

\begin{proof}
If the line segment between $x_1,x_2$ entirely lies in $D$, by Mean Value Theorem there exists $x_3$ on the segment such that $V(x_2)=V(x_1)+V_x(x_3)\cdot(x_2-x_1)$. Now by \eqref{M_1},
\[
|V(x_1)-V(x_2)|=|V_x(x_3)\cdot(x_1-x_2)|\leq |V_x(x_3)||x_1-x_2|\leq M_1|x_1-x_2|.
\]
In the case when the line segment is partially outside of $D$, let us say say that $y_1,y_2\in \partial D$ are two points on the segment connecting $x_1,x_2$ such that the line segment between $y_1,y_2$ is outside $D$. Since $y_1,y_2$ are on the boundary of $D$, the $V$ value must be either $c_1$ or $c_2$ at these two points. If $V(y_1)\neq V(y_2)$, say $V(y_1)=c_1$ and $V(y_2)=c_2$, then $V(x)\leq c_1$ or $V(x)\geq c_2$ for all $x$ on the line segment from $y_1$ to $y_2$. This cannot happen since $V$ is a continuous function. Therefore $V(y_1)=V(y_2)$. Hence using triangle inequality,
\begin{align*}
|V(x_1)-V(x_2)|&=|(V(x_1)-V(y_1))+(V(y_2)-V(x_2))|\\
&\leq |V(x_1)-V(y_1)|+|V(y_2)-V(x_2)|\\
&\leq M_1|x_1-y_1|+M_1|y_2-x_2|\\
&\leq M_1|x_1-x_2|.
\end{align*}
The second to last inequality follows  from the fact that the two segments $x_1$ to $y_1$ and $x_2$ to $y_2$ are contained  in $D$ so we can apply our earlier result. The last inequality is simply the fact that the sum of the lengths of the two segments is no longer than the total distance between $x_1$ and $x_2$. In the case when there are multiple segments between $x_1$ and $x_2$ that are outside of $D$, repeating the above analysis on each interval, we still get the same result.
\end{proof}

\begin{Lemma}\label{lem:1}
For any $x_1,x_2\in D$,
\begin{equation}\label{eqn:lem1}
|\dot{V}(x_1)-\dot{V}(x_2)|\leq \alpha|x_1-x_2|,
\end{equation}
where $\alpha:=M_1L_1+M_2\bar{L}_0$.
\end{Lemma}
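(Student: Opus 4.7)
The plan is to apply the standard product-rule telescoping trick to $\dot V(x) = V_x(x) \cdot f(x)$. First I would write
\[
\dot V(x_1) - \dot V(x_2) = V_x(x_1) \cdot \bigl(f(x_1) - f(x_2)\bigr) + \bigl(V_x(x_1) - V_x(x_2)\bigr) \cdot f(x_2),
\]
then take absolute values and use the Cauchy-Schwarz inequality on each inner product. On the first summand I would combine the pointwise bound $|V_x(x_1)| \leq M_1$ from \eqref{M_1} with the Lipschitz bound $|f(x_1)-f(x_2)| \leq L_1 |x_1-x_2|$ from \eqref{L_1}, giving $M_1 L_1 |x_1-x_2|$. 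On the second summand I would use $|V_x(x_1) - V_x(x_2)| \leq M_2 |x_1-x_2|$ from \eqref{M_2} together with $|f(x_2)| \leq \bar L_0$ from \eqref{bar_L0}, giving $M_2 \bar L_0 |x_1-x_2|$. Summing the two estimates yields the claim with $\alpha = M_1 L_1 + M_2 \bar L_0$.

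In contrast to the proof of Lemma \ref{lem:0}, this argument does not invoke the Mean Value Theorem, so there is no need to worry about whether the line segment between $x_1$ and $x_2$ stays inside $D$. Each of the four constants $M_1,\bar L_0,L_1,M_2$ is defined purely in terms of pointwise values or pairwise increments at points of $D$, and by compactness of $D$ together with the hypotheses that $f$ is Lipschitz and that $V$ is $C^1$ with locally Lipschitz gradient, these four constants are finite. I therefore anticipate no real obstacle; the argument is essentially a one-line computation, and the only bookkeeping is to pair the four ingredient bounds correctly after the telescoping split.
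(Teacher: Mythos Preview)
Your proposal is correct and matches the paper's own proof essentially line for line: the same add-and-subtract telescoping of $V_x(x_1)f(x_1)-V_x(x_2)f(x_2)$, followed by the same pairing of the bounds $M_1,L_1$ and $\bar L_0,M_2$. Your additional remark that no Mean Value Theorem argument is needed here is accurate and a nice observation.
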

\begin{proof}
Estimate
\begin{align*}
|\dot{V}(x_1)-\dot{V}(x_2)|&=|V_x(x_1)f(x_1)-V_x(x_2)f(x_2)|\\
&\leq|V_x(x_1)||f(x_1)-f(x_2)|+|f(x_2)||V_x(x_1)-V_x(x_2)|\\
&\leq M_1|f(x_1)-f(x_2)|+\bar{L}_0|V_x(x_1)-V_x(x_2)|\\
&\leq M_1L_1|x_1-x_2|+\bar{L}_0M_2|x_1-x_2|\\
&=\alpha|x_1-x_2|.
\end{align*}
Notice that we have used the definitions of $M_1$ from  \eqref{M_1} and $\bar L_0$ from \eqref{bar_L0} in the second to last inequality and the two Lipschitz constants $L_1,M_2$ from \eqref{L_1},\eqref{M_2} in the last inequality.

\end{proof}

\begin{Lemma}\label{lem:2}
If  $x\in\Omega_\eta$ then $\left(B_{\gamma_\eta}^n(x)\cap D
\right)\subseteq\Omega_1$, where
\begin{equation}\label{gamma}
\gamma_\eta:=\frac{(1-\eta)ac_1}{\alpha+\eta a M_1}
\end{equation}
with  $\alpha$ as defined in Lemma \ref{lem:1}.
\end{Lemma}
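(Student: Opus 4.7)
The plan is to show, for any $y \in B_{\gamma_\eta}^n(x) \cap D$, the pointwise inequality $\dot V(y) + a V(y) \geq 0$, i.e.\ $y \in \Omega$, and then invoke a short connectedness argument to place $y$ in the specific component $\Omega_1$ rather than in some other component of $\Omega$.

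For the main estimate, I would fix such a $y$ and exploit the hypothesis $x \in \Omega_\eta$, namely $\dot V(x) + \eta a V(x) \geq 0$. The crucial trick is to split
\[
\dot V(y) + a V(y) = \bigl[\dot V(y) + \eta a V(y)\bigr] + (1-\eta) a V(y).
\]
For the bracketed piece, Lemma~\ref{lem:1} gives $|\dot V(y)-\dot V(x)| \leq \alpha|y-x|$ and Lemma~\ref{lem:0} gives $|V(y)-V(x)| \leq M_1|y-x|$, so
\[
\dot V(y) + \eta a V(y) \geq \dot V(x) + \eta a V(x) - (\alpha + \eta a M_1)|y-x| \geq -(\alpha + \eta a M_1)|y-x|.
\]
For the remaining piece, $y \in D$ forces $V(y) \geq c_1$, so $(1-\eta) a V(y) \geq (1-\eta) a c_1$. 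Adding the two estimates and using $|y-x| \leq \gamma_\eta$ yields
\[
\dot V(y) + a V(y) \geq (1-\eta) a c_1 - (\alpha + \eta a M_1)\gamma_\eta = 0
\]
by the definition of $\gamma_\eta$ in \eqref{gamma}. Hence $y \in \Omega$.

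To upgrade from $y \in \Omega$ to $y \in \Omega_1$, I would argue that $B_{\gamma_\eta}^n(x) \cap D$ is a subset of $\Omega$ that contains $x \in \Omega_\eta \subseteq \Omega_1$, so its connected component through $x$ lies in $\Omega_1$, and running the same ball construction at every other point of $\Omega_\eta$ fills in the rest. The main obstacle is not topological but computational: one must choose the splitting above so that the denominator of $\gamma_\eta$ is $\alpha + \eta a M_1$ rather than the looser $\alpha + a M_1$. The key observation enabling the tighter constant is that the Lipschitz bound on $V$ need only be applied to the $\eta$-piece, while the leftover $(1-\eta)$-piece can be estimated directly against the lower bound $V \geq c_1$ on $D$.
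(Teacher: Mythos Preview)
Your proof is correct and follows essentially the same approach as the paper: both combine the Lipschitz bounds from Lemmas~\ref{lem:0} and~\ref{lem:1} with the hypothesis $\dot V(x)\geq -\eta a V(x)$ and the lower bound $V(y)\geq c_1$ to obtain $\dot V(y)\geq -aV(y)$, arriving at exactly the same constant $\gamma_\eta$. Your explicit splitting $\dot V(y)+aV(y)=[\dot V(y)+\eta aV(y)]+(1-\eta)aV(y)$ is just a reorganization of the paper's chain of inequalities, and you are in fact slightly more careful than the paper in flagging the connectedness step needed to land in the specific component $\Omega_1$ rather than merely in $\Omega$.
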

\begin{proof}
Let  $x\in\Omega_\eta,y\in D$ be such that $|x-y|\leq\gamma_\eta$. Since both of them are in $D$, by Lemma \ref{lem:0}, $V(x)\leq V(y)+M_1|x-y|\leq V(y)+M_1\gamma_\eta$.
Therefore
\begin{align*}
\dot V(y)&\geq\dot V(x)-|\dot V(x)-\dot V(y)|\geq-\eta a V(x)-\alpha|x-y|\\
&\geq-\eta a (V(y)+M_1\gamma_\eta)-\alpha\gamma_\eta=-\eta a V(y)-(1-\eta)a c_1\\
&\geq -a V(y).
\end{align*}
In the second inequality we have used the fact that $x\in\Omega_\eta$ so $\dot V(x)\geq \eta a V(x)$. We also used the result from Lemma \ref{lem:1} for bounding the second term in this step. Lemma \ref{lem:0} is used in the third inequality. Across the second line the terms depending on $\gamma_\eta$ are collected together and substituted with its definition \eqref{gamma}. In the last inequality we have used the fact that $y\in D$ so $c_1\leq V(y)$. Hence we have shown $y\in\Omega_1$ and $\left(B_{\gamma_\eta}^n(x)\cap D\right)\subseteq\Omega_1$.

\end{proof}

Define the normal disk of radius $\gamma$ centered at $x$ to be
\begin{equation}\label{normal_plane}
N_\gamma(x)=\{y\in B_{\gamma}^n(x):(y-x)\cdot f(x)=0\},
\end{equation}
which  is a ball $B_{\gamma}^{n-1}(x)$  in the hyperplane
\[
  \{y\in {\mathbb R^n}:(y-x)\cdot f(x)=0\}.
\]

Define
\begin{equation}\label{SweptArea}
S_{\eta,(s,t)}= \underset{ \tau\in(s,t)  }{\cup}  N_{\gamma_\eta}(x(\tau))
\end{equation}
to be the tube of radius $\gamma_\eta$ around  the solution on the time interval  $s$ to $t$. We will often refer to it as the {\it solution tube}.
We will say the tube is \textit{non-self-overlapping} over time interval $(s,t)$ if
\begin{equation}\label{sof}
N_{\gamma_\eta}(x(\tau_1))\cap N_{\gamma_\eta}(x(\tau_2))=\emptyset\quad\forall \tau_1,\tau_2\in(s,t),\tau_1\neq\tau_2.
\end{equation}

In a non-self-overlapping tube all the states are swept out only once by such $N_{\gamma_\eta}(x(\tau))$ normal disk at some $\tau\in(s,t)$. There will be
more discussion of non-self-overlapping condition in the next subsection.

Let
\[
\mathcal{L}_{s}^{t}:=\int_s^t|f(x(\tau)|d\tau
\]
be the length of the solution trajectory from time $s$ to $t$. Using the  bounds \eqref{bar_L0} and \eqref{underline_L0} on $f$, one has
\begin{equation}\label{length_boundary}
\underline{L}_0(t-s)\leq\mathcal{L}_{s}^{t}\leq \bar{L}_0(t-s).
\end{equation}
Define
\begin{equation}\label{g}
g:=\frac{b}{\underline L_0 \text{vol}(B^{n-1}_{\gamma_\eta})},
\end{equation}
\begin{equation}\label{h}
h:=M_1\chi(n)^{-\frac{1}{n}},
\end{equation}
where $\chi(n)$ comes from \eqref{eqn:volume}. Define a shrunk domain
\[
D^*:=\{x\in \mathbb R^n:c_1+h\epsilon^{\frac{1}{n}}\leq V(x)\leq c_2-h\epsilon^{\frac{1}{n}}\}.
\]
For any initial state $x(0)=x_0\in D$ with $V(x_0)< c_2-g\epsilon-h\epsilon^{\frac{1}{n}}$, by the standard theory of ODEs the solution can be continued either indefinitely or to the boundary of $D^*$. Define
\begin{equation}\label{def:T}
T:=\inf\{\tau\geq 0:x(t)\not\in D^*\}
\end{equation}
By this definition, $T=0$ if $V(x_0)< c_1+h\epsilon^{\frac{1}{n}}$ and $T$ could also be infinite if the solution stays in $D^*$ forever. Eventually, in the proof we show that $T$ has to be finite and it is impossible for the solution to reach the outer boundary of $D^*$ with $V(x(T))=c_2-h\epsilon^{\frac{1}{n}}$.
This $T$ will be the one in the main theorem statement that we are looking for. \\

Define the subset of the time interval  when the solution stays in  $\Omega_{\eta}$ as
\begin{equation}\label{def:X_eta}
X_{\eta} = \{ \tau \in [0,T): x(\tau)\in \Omega_{\eta}   \}.
\end{equation}
While the set $X_{\eta}$ might have a complicated structure, the relevant  part for us  is the interior which must be a union of intervals.  The almost Lyapunov function might increase
 when the solution is considered over such an interval.  When the solution is considered over a subset of $X_{\eta}$ which has empty interior, the almost Lyapunov function
 will be decreasing with the  rate $a$. A {\em  maximal interval} contained in $X_{\eta}$  is an interval in $X_{\eta}$ which cannot be enlarged without leaving $X_{\eta}$. We
 will also refer to such intervals  as {\em  connected components} of $X_{\eta}$.

\begin{figure}
\centering
\includegraphics[scale=0.5]{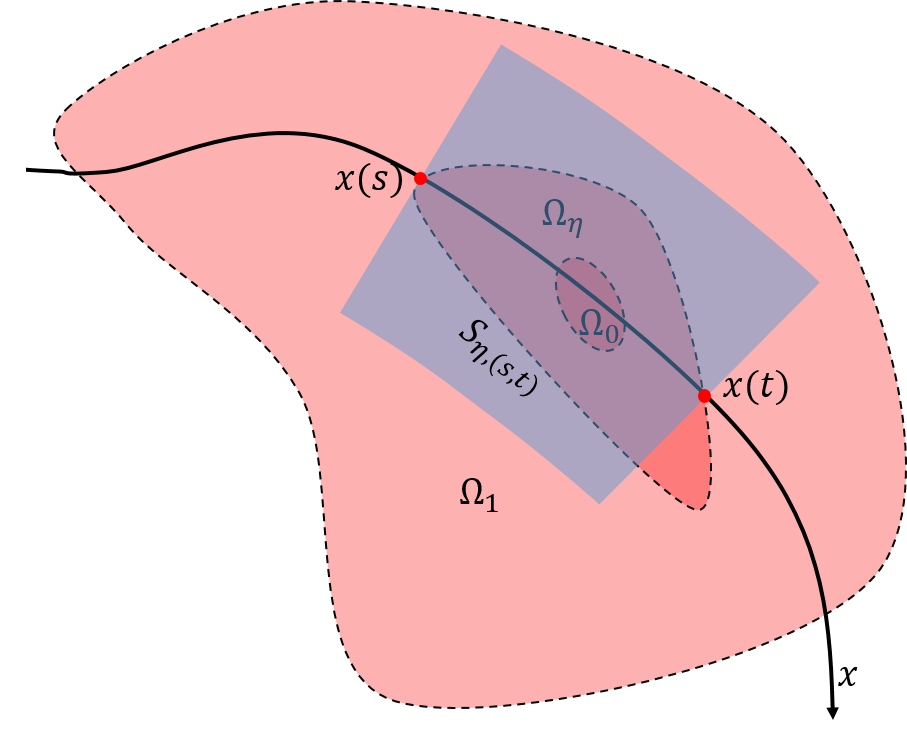}

\caption{A planar example showing the solution trajectory passing through $\Omega_\eta$, generating a tubular neighborhood $S_{\eta,(s,t)}$. In higher dimension the set  $S_{\eta,(s,t)}$ would look like a cylinder.}\label{fig:traj}
\end{figure}

The sweeping tube $S_{\eta,(s,t)}$ generated over a connected component $(s,t)\subseteq X_\eta$ is illustrated in Figure \ref{fig:traj}. Intuitively the volume of $S_{\eta,(s,t)}$ is the cross-section area times the trajectory length over $(s,t)$. The next lemma proves this, under the assumption that there is no self-overlapping:

\begin{Lemma}\label{prop:vol}
If the solution is non-self-overlapping over time interval $(s,t)$, then
\begin{equation}\label{rho}
\normalfont\text{vol}(S_{\eta,(s,t)})=\chi(n-1)\gamma_{\eta}^{n-1}\mathcal{L}_{s}^{t}.
\end{equation}
\end{Lemma}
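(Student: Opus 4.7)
The plan is to parametrize the tube by $(\tau,y)$, where $\tau\in(s,t)$ is time and $y$ ranges over coordinates on the normal disk, and then apply a change-of-variables formula to reduce $\text{vol}(S_{\eta,(s,t)})$ to an elementary integral. Concretely, I would choose a continuously varying orthonormal basis $e_1(\tau),\ldots,e_{n-1}(\tau)$ of the hyperplane $\{u\in\mathbb R^n:u\cdot f(x(\tau))=0\}$ (which exists on the interval $(s,t)$ by Gram--Schmidt transport from any initial frame) and define
\[
\Phi:(s,t)\times B_{\gamma_\eta}^{n-1}(0)\to\mathbb R^n,\qquad \Phi(\tau,y)=x(\tau)+\sum_{i=1}^{n-1}y_i e_i(\tau).
\]
By the definition \eqref{SweptArea} the image of $\Phi$ is exactly $S_{\eta,(s,t)}$, and the non-self-overlapping hypothesis \eqref{sof} is precisely the statement that $\Phi$ is injective.

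Next I would compute the Jacobian of $\Phi$. The partial derivatives are $\partial_{y_i}\Phi=e_i(\tau)$ and $\partial_\tau\Phi=f(x(\tau))+\sum_i y_i\dot e_i(\tau)$. Decomposing each $\dot e_i$ in the orthonormal basis $\{f/|f|,e_1,\ldots,e_{n-1}\}$ and using elementary column operations to eliminate the components of $\partial_\tau\Phi$ lying in the span of $e_1,\ldots,e_{n-1}$ (which does not change the determinant), the determinant of $D\Phi(\tau,y)$ reduces to
\[
\det D\Phi(\tau,y)=\pm\bigl(|f(x(\tau))|+y\cdot\alpha(\tau)\bigr),
\]
where $\alpha_i(\tau):=\dot e_i(\tau)\cdot f(x(\tau))/|f(x(\tau))|=-e_i(\tau)\cdot\dot f(x(\tau))/|f(x(\tau))|$. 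The sign is determined by the orientation of the chosen frame and remains constant along the trajectory; by choosing the orientation appropriately I may take it to be $+1$.

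Finally, the area formula for the injective locally Lipschitz map $\Phi$ combined with Fubini gives
\[
\text{vol}(S_{\eta,(s,t)})=\int_s^t\int_{B_{\gamma_\eta}^{n-1}(0)}\bigl(|f(x(\tau))|+y\cdot\alpha(\tau)\bigr)\,dy\,d\tau.
\]
The term $y\cdot\alpha(\tau)$ is linear in $y$ and integrates to zero over the centered ball $B_{\gamma_\eta}^{n-1}(0)$, so the inner integrand effectively reduces to $|f(x(\tau))|$, and the total integral evaluates to $\text{vol}(B_{\gamma_\eta}^{n-1})\int_s^t|f(x(\tau))|\,d\tau=\chi(n-1)\gamma_\eta^{n-1}\mathcal L_s^t$, which is the asserted formula. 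The main obstacle is justifying that the integrand in the area formula is $(|f|+y\cdot\alpha)$ rather than $\bigl|\,|f|+y\cdot\alpha\bigr|$, i.e.\ that the Jacobian has a constant sign on the parameter domain; since $|\alpha_i|\le|\dot f|/|f|\le L_1$ (using $|\dot f|=|f_x f|\le L_1|f|$) and $|f|\ge\underline L_0>0$ on $D$, this holds whenever $\gamma_\eta\sqrt{n-1}\,L_1<\underline L_0$, and is in any case consistent with the geometric meaning of the non-self-overlapping hypothesis, which prevents the tube from folding back on itself.
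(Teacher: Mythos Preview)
Your argument is correct and is essentially what the paper's cited tube formulas from \cite{CR89,RF06} amount to; the paper does not spell out a proof but simply invokes those references, so your explicit change-of-variables computation is a more self-contained version of the same idea. The paper's Remark following the lemma addresses the same sign issue you flag in your final paragraph: the cited formula in \cite{RF06} gives a signed volume, and the non-self-overlapping hypothesis is invoked precisely to rule out negative or multiple counts.

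Two small refinements. First, your sufficient condition $\gamma_\eta\sqrt{n-1}\,L_1<\underline L_0$ is looser than necessary: since the $e_i$ are orthonormal in the normal hyperplane, $\sum_i\alpha_i^2=\sum_i\bigl(e_i\cdot\dot f/|f|\bigr)^2\le|\dot f|^2/|f|^2\le L_1^2$, so $|\alpha|\le L_1$ directly and $|y\cdot\alpha|\le\gamma_\eta L_1$. The positivity of the Jacobian therefore follows from $\gamma_\eta<\underline L_0/L_1$, which is exactly the paper's standing hypothesis \eqref{turning_radius}; with that in hand your absolute-value concern disappears. Second, since $f$ is only Lipschitz, the frame $e_i(\tau)$ is Lipschitz and $\dot e_i$, $\alpha$ exist only almost everywhere (by Rademacher); this is harmless because the area formula for injective Lipschitz maps only needs the Jacobian a.e., and the paper makes the same a.e.\ observation later when bounding the curvature.
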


The proof of this lemma is a direct application of results  from \cite[Chapter 4.10]{CR89},\cite{RF06}. The conditions for non-self-overlapping will be discussed in the next section.

\begin{Remark}
The formula in \cite{RF06} yields a signed volume with multiplicity (which is a result of negative self-overlapping); nevertheless, the non-self-overlapping condition we have ensures that there are no negative or multiple counts of the integrated volume and the result is indeed the absolute volume that we want as a lower bound.
\end{Remark}


\begin{Lemma}\label{cor:5}
$S_{\eta,(s,t)}\subseteq \Omega_1$ for all $(s,t)\subset X_{\eta}$.
\end{Lemma}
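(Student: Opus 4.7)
The proof is essentially a pointwise restatement of Lemma~\ref{lem:2}. Let $y \in S_{\eta,(s,t)}$ be arbitrary. By \eqref{SweptArea} one can pick $\tau \in (s,t) \subseteq X_\eta$ with $y \in N_{\gamma_\eta}(x(\tau))$, and \eqref{normal_plane} then places $y$ inside the full ball $B_{\gamma_\eta}^n(x(\tau))$, so $|y-x(\tau)|\leq\gamma_\eta$. Since $\tau \in X_\eta$, the center $x(\tau)$ lies in $\Omega_\eta$, and Lemma~\ref{lem:2} yields $B_{\gamma_\eta}^n(x(\tau)) \cap D \subseteq \Omega_1$. Hence $y \in \Omega_1$ as soon as $y \in D$, and taking the union over $\tau$ gives $S_{\eta,(s,t)} \subseteq \Omega_1$.

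The only thing to verify beyond Lemma~\ref{lem:2} is that each such $y$ lies in $D$. I would handle this via the buffer built into the shrunk domain $D^*$: for $\tau < T$ one has $x(\tau) \in D^*$, i.e.\ $V(x(\tau)) \in [c_1 + h\epsilon^{1/n},\, c_2 - h\epsilon^{1/n}]$, while Lemma~\ref{lem:0} (chained across $\partial D$ exactly as in its own proof, should the segment from $x(\tau)$ to $y$ happen to leave $D$) gives $|V(y) - V(x(\tau))| \leq M_1|y-x(\tau)| \leq M_1 \gamma_\eta$. Provided the calibration of $\bar\epsilon$ promised in Section~\ref{sec:4.3} ensures $M_1\gamma_\eta \leq h \epsilon^{1/n}$, the preceding estimate forces $V(y) \in [c_1,c_2]$, hence $y \in D$ and therefore $y \in \Omega_1$.

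The main obstacle, if any, is precisely this bookkeeping step: matching the scales of $\gamma_\eta$ from \eqref{gamma}, $h$ from \eqref{h}, and the threshold $\bar\epsilon$ so that the buffer of $D^*$ absorbs the radial displacement $\gamma_\eta$ of the tube cross-section. Once that scale check is in place, the conclusion is nothing more than a pointwise application of Lemma~\ref{lem:2} to each normal disk of the tube, followed by a union over $\tau\in(s,t)$.
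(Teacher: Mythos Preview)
Your reduction to Lemma~\ref{lem:2} is correct, and you have correctly identified that the only nontrivial point is showing $y\in D$ whenever $y\in N_{\gamma_\eta}(x(\tau))$ and $x(\tau)\in D^*\cap\Omega_\eta$. But the way you propose to close this gap does not work.

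The inequality you defer to Section~\ref{sec:4.3}, namely $M_1\gamma_\eta\le h\epsilon^{1/n}$, is \emph{not} part of the calibration of $\bar\epsilon$. Unwinding $h=M_1\chi(n)^{-1/n}$, your inequality reads $\chi(n)\gamma_\eta^{\,n}\le\epsilon$, i.e.\ a \emph{lower} bound on $\epsilon$ in terms of the fixed radius $\gamma_\eta$. The theorem, however, must hold for every $\epsilon\in[0,\bar\epsilon)$, so no such lower bound can be imposed; for small $\epsilon$ your buffer comparison $M_1\gamma_\eta\le h\epsilon^{1/n}$ simply fails. (Note also that Lemma~\ref{lem:0} assumes both endpoints lie in $D$; your ``chaining across $\partial D$'' remark does not cover the case $y\notin D$, though this is a secondary issue.)

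The paper closes the gap by a volume contradiction rather than a scale comparison. Assuming $B_{\gamma_\eta}(x)$ protrudes from $D$ for some $x\in D^*\cap\Omega_\eta$, one observes that any boundary point $y\in\partial D\cap B_{\gamma_\eta}(x)$ satisfies $|x-y|\ge(\epsilon/\chi(n))^{1/n}$ by the $D^*$ buffer and Lemma~\ref{lem:0}. Hence the portion $A:=B_{\gamma_\eta}(x)\cap D$ strictly contains a ball of radius $(\epsilon/\chi(n))^{1/n}$, giving $\mathrm{vol}(A)>\epsilon$. But Lemma~\ref{lem:2} forces $A\subseteq\Omega_1$, so $\mathrm{vol}(A)\le\mathrm{vol}(\Omega_1)\le\epsilon$, a contradiction. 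The volume hypothesis on $\Omega_1$ is thus used essentially here, and this is the idea your argument is missing.
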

\begin{proof}
By Lemma \ref{lem:2}, the definition of $S_{\eta,(s,t)}$ in \eqref{SweptArea} and the definition of $X_\eta$ in \eqref{def:X_eta}, it suffices to show that $B_{\gamma_\eta}(x)\subseteq D$ for any $x\in D^*\cap \Omega_{\eta}$. If this is not true, there exists $x\in D^*\cap \Omega_{\eta}$ such that $B_{\gamma_\eta}(x)$ is partially outside of $D$ (cannot be completely outside of $D$ as $x\in D^*\subset D$). In this case, introduce the sets
\begin{align*}
S_{in}&=\partial B_{\gamma_\eta}(x)\cap D,\\
S_{out}&=\partial B_{\gamma_\eta}(x)\backslash S_{in},\\
S_{D}&=\partial D\cap B_{\gamma_\eta}(x).
\end{align*}
None of these sets are  empty and  for any $y\in S_{D}$, $V(y)=c_1$ or $c_2$. By definition of $D^*$ and Lemma \ref{lem:0} we have
\[
h\epsilon^{\frac{1}{n}}\leq |V(x)-V(y)|\leq M_1|x-y|\Rightarrow |x-y|\geq \left(\frac{\epsilon}{\chi(n)}\right)^{\frac{1}{n}}.
\]
Let $z\in S_{out}$. Then the line segment $[x,z]$ intersects with $S_{D}$ at some point $y$ so $|x-z|=|z-y|+|y-x|$ and then
\[
 \gamma_\eta\geq \delta+\left(\frac{\epsilon}{\chi(n)}\right)^{\frac{1}{n}}
\] for some $\delta>0$. Denote the volume bounded by the surfaces $S_{in},S_{D}$ by $A$. Then $A\subseteq\Omega_1$ so $\text{vol}(A)\leq \text{vol}(\Omega_1)\leq \epsilon$. On the other hand, by the earlier analysis points on $S_{D}$ are at least $\left(\frac{\epsilon}{\chi(n)}\right)^{\frac{1}{n}}$ away from $x$ and points on $S_{in}$ are at least $\delta+\left(\frac{\epsilon}{\chi(n)}\right)^{\frac{1}{n}}$ away from $x$. This means $A$ contains a ball of radius $\left(\frac{\epsilon}{\chi(n)}\right)^{\frac{1}{n}}$ so $\text{vol}(A)>\chi(n)\left(\frac{\epsilon}{\chi(n)}\right)=\epsilon$ (the positivity of  $\delta$ and the continuity of the surface result in the strict inequality), which is a contradiction.

\end{proof}

The result of Lemma~\ref{cor:5} is illustrated in Figure \ref{fig:traj} that the sweeping tube is a subset of the ``bad region" $\Omega_1$. Now applying the formula \eqref{rho} here with the assumption that the solution is non-self-overlapping, we have
\begin{eqnarray}\label{newrho}
\epsilon\geq\normalfont\text{vol}(\Omega_1)\geq\text{vol}(S_{\eta,(s,t)})=\text{vol}(B_{\gamma_\eta}^{n-1})\mathcal{L}_{s}^{t} \\
\geq\text{vol}(B_{\gamma_\eta}^{n-1}) \, \underline{L}_0(t-s)=\frac{b}{g}(t-s). \nonumber
\end{eqnarray}

\begin{Corollary}\label{lem:last}
Let $(s,t)\subset X_{\eta}$ and assume the solution over this time interval is non-self-overlapping. Then the length of the time interval must satisfy
\[
t-s\leq \frac{g\epsilon }{b}.
\]
\end{Corollary}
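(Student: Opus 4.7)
The corollary is essentially immediate from the chain of inequalities~\eqref{newrho} that has just been assembled, so the plan is simply to read off the conclusion by rearranging that chain. In detail, I would string together four ingredients in order: first, the hypothesis of Theorem~\ref{thm1} that every connected component of $\Omega$ has volume at most $\epsilon$, which yields $\text{vol}(\Omega_1)\leq\epsilon$ since $\Omega_1$ (being a connected component of the set \eqref{Omega_eta} for $\eta=1$) coincides with some connected component $\Omega^*$ of $\Omega$; second, the containment $S_{\eta,(s,t)}\subseteq\Omega_1$ furnished by Lemma~\ref{cor:5}, which uses in turn that $(s,t)\subset X_\eta$ and the tube-in-bad-region estimate of Lemma~\ref{lem:2}; third, the non-self-overlapping hypothesis, which lets me invoke Lemma~\ref{prop:vol} to rewrite $\text{vol}(S_{\eta,(s,t)})$ as the cross-section times arc length, namely $\chi(n-1)\gamma_\eta^{n-1}\mathcal{L}_s^t = \text{vol}(B_{\gamma_\eta}^{n-1})\mathcal{L}_s^t$; and fourth, the lower bound $\mathcal{L}_s^t\geq\underline{L}_0(t-s)$ from~\eqref{length_boundary}, which is where the non-vanishing hypothesis $\underline{L}_0>0$ finally bites.

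Combining these four steps gives
\[
\epsilon \;\geq\; \text{vol}(\Omega_1) \;\geq\; \text{vol}(S_{\eta,(s,t)}) \;=\; \text{vol}(B_{\gamma_\eta}^{n-1})\,\mathcal{L}_s^t \;\geq\; \text{vol}(B_{\gamma_\eta}^{n-1})\,\underline{L}_0\,(t-s).
\]
From the definition $g := b/\bigl(\underline{L}_0\,\text{vol}(B_{\gamma_\eta}^{n-1})\bigr)$ in~\eqref{g}, we have $\text{vol}(B_{\gamma_\eta}^{n-1})\,\underline{L}_0 = b/g$, so the inequality collapses to $\epsilon \geq (b/g)(t-s)$, and rearranging yields $t-s\leq g\epsilon/b$ as desired.

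There is essentially no obstacle here; the corollary is a bookkeeping step whose only purpose is to isolate the time bound for later reuse. The only tiny point worth being careful about is the identification of $\Omega_1$ with a connected component of $\Omega$: the family of sets $\Omega_\eta$ was chosen (in the paragraph after~\eqref{Omega_eta}) so that $\Omega_1$ is a connected component of $\{x\in D:\dot V(x)\geq -aV(x)\}=\Omega$, so the hypothesis $\text{vol}(\Omega^*)\leq\epsilon$ on every connected component applies directly to $\Omega_1$. Everything hard has already been done in Lemmas~\ref{lem:2},~\ref{prop:vol}, and~\ref{cor:5}.
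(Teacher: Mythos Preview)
Your proposal is correct and is exactly the paper's approach: the corollary is stated immediately after the display~\eqref{newrho}, which is precisely the chain of inequalities you reproduce, and the paper offers no separate proof beyond that. Your only addition is the explicit unpacking of the definition of $g$ and the remark about why $\Omega_1$ is a connected component of $\Omega$, both of which are fine.
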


\subsection{On non-self-overlapping condition}

The following proposition  gives  a geometric criterion of non-self-overlapping.
\begin{Proposition}\label{non-overlap}
Consider a tube of radius $\rho_0$ around a space curve $\gamma(\tau)$ whose radius of curvature is bounded from below by $\rho$. If $ \rho > \rho_0$ and if the length $\mathcal L$ of $\gamma(\tau)$ is bounded:
\begin{equation}\label{eqn:prop1}
\mathcal L<2 \rho\left(\pi-\sin^{-1}(\frac{\rho_0}{\rho})\right)
\end{equation}
then the tube is non-self-overlapping.
\end{Proposition}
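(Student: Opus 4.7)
The plan is to argue by contradiction. Suppose the tube is self-overlapping, so there exist $\tau_1<\tau_2$ in the time interval and a point $p$ belonging to both $N_{\rho_0}(\gamma(\tau_1))$ and $N_{\rho_0}(\gamma(\tau_2))$. Parametrize $\gamma$ by arclength and write $T(\tau)=\gamma'(\tau)$, with $N(\tau)$ the principal normal and $\kappa(\tau)\le 1/\rho$ the curvature. The definition of the normal disk gives
\[
|p-\gamma(\tau_i)|\le\rho_0,\qquad (p-\gamma(\tau_i))\cdot T(\tau_i)=0,\quad i=1,2.
\]
I will derive $\tau_2-\tau_1\ge 2\rho(\pi-\sin^{-1}(\rho_0/\rho))$, contradicting the length hypothesis.

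The main tool is the squared-distance function $h(\tau):=|p-\gamma(\tau)|^2$, with $h'(\tau)=-2(p-\gamma(\tau))\cdot T(\tau)$ and $h''(\tau)=2-2\kappa(\tau)(p-\gamma(\tau))\cdot N(\tau)$. The perpendicularity conditions give $h'(\tau_i)=0$, while $\kappa(\tau_i)\le 1/\rho$ and $|p-\gamma(\tau_i)|\le\rho_0<\rho$ yield $h''(\tau_i)>0$, so $\tau_1$ and $\tau_2$ are strict local minima of $h$. Since $h$ attains its maximum on the compact interval $[\tau_1,\tau_2]$ at an interior point $\tau^*\in(\tau_1,\tau_2)$, there we have $h'(\tau^*)=0$ (hence $T(\tau^*)\perp p-\gamma(\tau^*)$) and $h''(\tau^*)\le 0$. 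The latter implies $(p-\gamma(\tau^*))\cdot N(\tau^*)\ge 1/\kappa(\tau^*)\ge\rho$, and in particular $|p-\gamma(\tau^*)|\ge\rho$.

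With this intermediate point in hand, split the arc at $\tau^*$ and estimate each piece separately. Writing $\ell_1=\tau^*-\tau_1$, the identity
\[
(\gamma(\tau^*)-\gamma(\tau_1))\cdot T(\tau^*)=-(p-\gamma(\tau^*))\cdot T(\tau^*)+(p-\gamma(\tau_1))\cdot T(\tau^*)=(p-\gamma(\tau_1))\cdot T(\tau^*),
\]
combined with $|p-\gamma(\tau_1)|\le\rho_0$, bounds the left-hand side by $\rho_0$ in absolute value. Rewriting the same quantity as $\int_{\tau_1}^{\tau^*}T(\sigma)\cdot T(\tau^*)\,d\sigma$ and using that the spherical distance between $T(\sigma)$ and $T(\tau^*)$ is at most $(\tau^*-\sigma)/\rho$ (so the integrand is at least $\cos(\min((\tau^*-\sigma)/\rho,\pi))$), one obtains, for $\ell_1\le\pi\rho$, the lower bound $\rho\sin(\ell_1/\rho)$. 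The combined inequality $\rho\sin(\ell_1/\rho)\le\rho_0$ forces $\ell_1/\rho$ into $[0,\sin^{-1}(\rho_0/\rho)]\cup[\pi-\sin^{-1}(\rho_0/\rho),\pi]$, and when $\ell_1>\pi\rho$ one automatically has $\ell_1>\rho(\pi-\sin^{-1}(\rho_0/\rho))$. The same dichotomy holds for $\ell_2=\tau_2-\tau^*$.

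The main obstacle is excluding the small branch $\ell_i\le\rho\sin^{-1}(\rho_0/\rho)$. A first-pass argument uses the triangle inequality $\ell_i\ge|\gamma(\tau^*)-\gamma(\tau_i)|\ge|p-\gamma(\tau^*)|-|p-\gamma(\tau_i)|\ge\rho-\rho_0$, which already beats $\rho\sin^{-1}(\rho_0/\rho)$ in a nontrivial range of $\rho_0/\rho$; for the remaining range one needs a sharper geometric argument combining the monotonicity of $|p-\gamma(\tau)|$ on $[\tau_1,\tau^*]$ (forced by $h'>0$ there) with the perpendicularity at both endpoints to force a nontrivial tangent rotation, and hence extra arclength, along the sub-arc. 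Once the small branch is excluded on both sides, each $\ell_i\ge\rho(\pi-\sin^{-1}(\rho_0/\rho))$, and therefore $\mathcal L\ge\ell_1+\ell_2\ge 2\rho(\pi-\sin^{-1}(\rho_0/\rho))$, the desired contradiction.
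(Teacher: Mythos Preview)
Your approach differs substantially from the paper's. The paper forms a closed piecewise-$C^2$ curve from the arc $\gamma|_{[\tau_1,\tau_2]}$ together with the two chords $[\gamma(\tau_i),p]$, applies Fenchel's theorem to bound the angle at $p$ by the total curvature of the arc, and then runs a three-case analysis on the arc length, invoking Schur's comparison theorem to reach a contradiction in each case. Your distance-function method --- locating an interior maximum $\tau^*$ of $h(\tau)=|p-\gamma(\tau)|^2$ with $|p-\gamma(\tau^*)|\ge\rho$ and splitting the arc there --- is an attractive idea that would in principle bypass both classical theorems.

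But the proof has a real gap, and it is exactly the one you flag yourself: the exclusion of the small branch $\ell_i\le\rho\sin^{-1}(\rho_0/\rho)$ is not carried out. Your triangle-inequality estimate $\ell_i\ge|\gamma(\tau^*)-\gamma(\tau_i)|\ge\rho-\rho_0$ beats $\rho\sin^{-1}(\rho_0/\rho)$ only when $1-\rho_0/\rho>\sin^{-1}(\rho_0/\rho)$, which already fails around $\rho_0/\rho\approx 0.48$; for larger ratios you invoke ``a sharper geometric argument combining the monotonicity of $|p-\gamma(\tau)|$ \ldots\ with the perpendicularity at both endpoints to force a nontrivial tangent rotation,'' but no such argument is actually given. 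Without it the dichotomy does not force $\ell_i\ge\rho(\pi-\sin^{-1}(\rho_0/\rho))$, and the contradiction does not close. A secondary issue: the parenthetical claim that $h'>0$ on $(\tau_1,\tau^*)$ is not automatic from $\tau^*$ being the global maximum --- it requires the extra observation (available from your own formula for $h''$) that every critical point of $h$ with $h<\rho^2$ is a strict local minimum, so that the \emph{first} critical point after $\tau_1$ already satisfies $h\ge\rho^2$ and can be taken as $\tau^*$.
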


The value on the right hand side of \eqref{eqn:prop1} is the curve length of a circular arc with radius of curvature $\rho$ and chord distance of $2\rho_0$ between end points. The proof of this proposition makes use of two classical results of Fenchel's Theorem and Schur's Comparison Theorem (see \cite{JS08}), and is provided in the appendix B.

At this point, the solution of our system can be viewed as a space curve $x=\gamma(s)$ in $\mathbb R^n$. Thus we have the curvature
\[
k(s) = \frac{[\gamma',\gamma'']}{|\gamma'|^3}(s),
\]
where $[*,*]$ is a standard area form.
This formula is a simple consequence of  the definition of centripetal acceleration $a = v^2 k$. Indeed, $[\gamma', \gamma''] =
|\gamma'| |\gamma''| \sin \alpha$ where $\sin\alpha$ is the angle between the two vectors $\gamma',\gamma''$. When $[\gamma', \gamma'']$ is divided by $|\gamma'|^3$, we obtain $|\gamma''|\sin \alpha/|\gamma'|^2$, which is the projection
of acceleration onto the normal vector to the curve (centripetal acceleration) divided by velocity squared. The second order derivative in the definition of $\kappa(s)$ involves gradient of $f(x)$, which may not exist if $f(x)$ is only assumed to be Lipschitz. Nevertheless, according to Rademacher's Theorem, a Lipschitz vector field is differentiable almost everywhere so curvature exists almost everywhere, which is enough for our subsequent proof as discussed in \cite{JS08} and the result is similar to the case if the curve is $C^2$. Hence, applying this bound to our curve $x(s)$ wherever $\nabla f$ exists:
\[
|k(s)|  \leq \frac{|\dot x| |\ddot x|}{|\dot x|^3} \leq \frac{|\ddot x|}{|\dot x|^2} \leq \frac{||\nabla f(x)|| \,  |\dot x|}{|\dot x |^2} \leq \frac{||\nabla f(x)||}{|f(x) | }   \leq \frac{L_1}{\underline{L}_0}.
\]
This implies that $\frac{L_1}{\underline{L}_0}$ is an upper bound of curvature along the solution $x(t)$ almost everywhere. Therefore, since radius of curvature is simply the reciprocal of curvature, Proposition \ref{non-overlap} implies a sufficient condition for non-self-overlapping solution of our system:
\begin{Corollary}\label{prop:sof}
A tube of radius $\gamma_{\eta}$ around the solution $x(\tau)$  is non-self-overlapping over the interval $(s,t)$ if
\begin{equation}\label{turning_radius}
\gamma_\eta<\frac{\underline{L}_0}{L_1}
\end{equation}
and
\begin{equation}\label{overall_length}
\mathcal{L}_{s}^{t}< \frac{2\underline{L}_0}{L_1}\left(\pi-\sin^{-1}(\frac{L_1\gamma_\eta}{\underline{L}_0})\right).
\end{equation}
\end{Corollary}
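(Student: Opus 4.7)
The plan is simply to invoke Proposition \ref{non-overlap} with the curvature bound derived in the paragraph immediately preceding the corollary. First I would set $\rho_0 = \gamma_\eta$ and let $\rho$ be any lower bound on the radius of curvature of the solution curve $x(\tau)$ on $(s,t)$. The preceding analysis shows that the curvature $k(\tau)$ of the solution (viewed as a space curve parametrized by arc length) satisfies $|k(\tau)| \leq L_1/\underline{L}_0$ almost everywhere, where $L_1$ is the Lipschitz constant of $f$ on $D$ and $\underline{L}_0 = \min_{x\in D} |f(x)| > 0$ by the non-vanishing assumption. Taking reciprocals, the radius of curvature is bounded below by $\rho := \underline{L}_0 / L_1$.

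Next I would check the two hypotheses of Proposition \ref{non-overlap}. The condition $\rho > \rho_0$ translates exactly to $\gamma_\eta < \underline{L}_0/L_1$, which is hypothesis \eqref{turning_radius}. With $\rho$ and $\rho_0$ as above, the length condition \eqref{eqn:prop1} becomes
\[
\mathcal{L}_s^t < 2\cdot\frac{\underline{L}_0}{L_1}\left(\pi - \sin^{-1}\!\left(\frac{\gamma_\eta}{\underline{L}_0/L_1}\right)\right) = \frac{2\underline{L}_0}{L_1}\left(\pi - \sin^{-1}\!\left(\frac{L_1 \gamma_\eta}{\underline{L}_0}\right)\right),
\]
which is precisely \eqref{overall_length}. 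Hence both hypotheses of Proposition \ref{non-overlap} are satisfied, and the proposition yields that the tube of radius $\gamma_\eta$ around $x(\tau)$ is non-self-overlapping on $(s,t)$.

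The only subtlety I anticipate is the use of Proposition \ref{non-overlap} when $f$ is merely Lipschitz rather than $C^1$, since in that case the second derivative $\ddot{x}$ used to define curvature may fail to exist on a measure-zero set. This issue has already been addressed in the discussion preceding the corollary by appealing to Rademacher's theorem, which guarantees differentiability of $f$ almost everywhere; as noted there (and as is standard from \cite{JS08}), having the curvature bound hold almost everywhere is enough to invoke Fenchel's and Schur's theorems, which are the ingredients in the proof of Proposition \ref{non-overlap}. With this technicality absorbed into Proposition \ref{non-overlap} itself, the corollary follows immediately.
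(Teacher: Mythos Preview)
Your proposal is correct and is exactly the argument the paper uses: bound the curvature of the solution curve by $L_1/\underline{L}_0$ (equivalently, bound the radius of curvature below by $\underline{L}_0/L_1$), then apply Proposition~\ref{non-overlap} with $\rho_0=\gamma_\eta$ and $\rho=\underline{L}_0/L_1$, so that its two hypotheses become precisely \eqref{turning_radius} and \eqref{overall_length}. The Rademacher point you raise is also handled the same way in the paper.
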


Note that according to  \eqref{gamma} $\gamma_\eta$ is a decreasing function of $\eta$ and $\gamma_1=0$, thus, the inequality \eqref{turning_radius} can always be satisfied by picking $\eta$ close enough to $1$.

\begin{Remark}
Bounded curvature is an important feature for non-vanishing vector fields since bounded curvature  prevents the system from some undesired  behavior which will not generate new sweeping volume, such as spinning around inside a small region.
\end{Remark}

Now we have found a criterion of non-self-overlapping  \eqref{overall_length} in terms of the constraint on the path length,   but we need to reformulate this criterion in terms of the measure of the bad set. Suppose that \eqref{turning_radius} holds with the volume bound analogue of  \eqref{overall_length}
\begin{equation}\label{overall_volume}
\epsilon<\epsilon_1:= \text{vol}(B_{\gamma_\eta}^{n-1})\frac{2\underline{L}_0}{L_1}\left(\pi-\sin^{-1}(\frac{L_1\gamma_\eta}{\underline{L}_0})\right).
\end{equation}
Then we have
\begin{Lemma}\label{cor_3}
Assume $\eta$ satisfies the inequality \eqref{turning_radius} and $\epsilon<\epsilon_1$ as defined in \eqref{overall_volume}. Then $S_{\eta,(s,t)}$ is non-self-overlapping for any $(s,t)\subseteq X_{\eta}$.
\end{Lemma}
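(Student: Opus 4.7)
The main difficulty is a mild circularity: Lemma \ref{prop:vol} expresses $\text{vol}(S_{\eta,(s,t)})$ in terms of the arclength only under the non-self-overlapping hypothesis, which is precisely what we want to conclude. My plan is to break this circularity via a standard bootstrap/continuity argument on the length of the time window, promoting a local non-self-overlapping property to a global one.

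First I would define
\[
\tau^* := \sup\{\tau \in (s,t] : S_{\eta,(s,\tau)} \text{ is non-self-overlapping}\}
\]
and observe that $\tau^* > s$: on a sufficiently short initial subinterval the arclength $\mathcal{L}_s^\tau$ is arbitrarily small, and \eqref{turning_radius} holds by assumption, so Corollary \ref{prop:sof} applies trivially and certifies non-self-overlapping on that subinterval. The entire task then reduces to showing $\tau^* = t$.

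Assume toward contradiction that $\tau^* < t$. For every $\tau \in (s,\tau^*)$ Lemma \ref{prop:vol} applies and gives $\text{vol}(S_{\eta,(s,\tau)}) = \text{vol}(B^{n-1}_{\gamma_\eta})\,\mathcal{L}_s^\tau$, while Lemma \ref{cor:5} forces $S_{\eta,(s,\tau)} \subseteq \Omega_1$. Combined with the hypothesis $\text{vol}(\Omega_1) \leq \epsilon < \epsilon_1$ and the definition \eqref{overall_volume} of $\epsilon_1$, this chain yields the strict bound
\[
\mathcal{L}_s^{\tau} \;<\; \frac{2\underline{L}_0}{L_1}\!\left(\pi - \sin^{-1}\!\frac{L_1\gamma_\eta}{\underline{L}_0}\right).
\]
Letting $\tau \uparrow \tau^*$ and using continuity of the arclength functional preserves this inequality at $\tau^*$; since its right-hand side does not depend on $\tau$, continuity of $\tau \mapsto \mathcal{L}_s^\tau$ produces some $\delta > 0$ for which the same strict bound still holds at $\tau^*+\delta$. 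Corollary \ref{prop:sof} then certifies that $S_{\eta,(s,\tau^*+\delta)}$ is non-self-overlapping, contradicting the definition of $\tau^*$.

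The only delicate point I expect is the initialization $\tau^* > s$, which requires knowing that the local flow $\tau \mapsto x(\tau)$ is an embedded arc with slowly turning tangent on some right neighborhood of $s$; this is routine for a non-vanishing Lipschitz vector field and in fact follows already from Corollary \ref{prop:sof} applied to a short enough subinterval. Beyond that, the argument is a purely topological open/closed bootstrap, and all analytic content has already been packaged into Lemmas \ref{prop:vol}, \ref{cor:5} and Corollary \ref{prop:sof}.
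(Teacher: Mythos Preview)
Your proposal is correct and follows essentially the same route as the paper: define the supremum $\tau^*$ (the paper calls it $\tilde t$) of times up to which the tube is non-self-overlapping, use \eqref{turning_radius} with Corollary~\ref{prop:sof} to get $\tau^*>s$, and then combine Lemma~\ref{cor:5}, Lemma~\ref{prop:vol}, and the strict gap $\epsilon<\epsilon_1$ to bound $\mathcal L_s^{\tau^*}$ strictly below the threshold in \eqref{overall_length}, so that Corollary~\ref{prop:sof} extends non-self-overlapping past $\tau^*$ and forces $\tau^*=t$. The only cosmetic point is that ``letting $\tau\uparrow\tau^*$ preserves the strict inequality'' deserves one more word: the strictness survives the limit not because strict inequalities pass to limits, but because the chain actually gives $\mathcal L_s^{\tau}\le \epsilon/\mathrm{vol}(B^{n-1}_{\gamma_\eta})$ for $\tau<\tau^*$, and it is the fixed gap $\epsilon<\epsilon_1$ that then yields the strict bound at $\tau^*$ --- exactly as the paper makes explicit.
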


\begin{proof}

By Lemma \ref{cor:5} we have $S_{\eta,(s,t)}\subseteq \Omega_1$ so that $\text{vol}(S_{\eta,(s,t)})\leq \text{vol}(\Omega_1)\leq \epsilon<\epsilon_1$. Let
\[
\tilde t:=\sup\{\tau\in(s,t]:\mbox{solution is non-self-overlapping over }[s,\tau)\}.
\]
The solution is always non-self-overlapping when $\tau$ is sufficiently close to $s$ because of the inequality \eqref{turning_radius} so the above set is non-empty and the supremum exists. Our goal is to show $\tilde{t}=t$. Because \eqref{overall_length} means any tube generated by any shorter curve will be non-self-overlapping, the solution is non-self-overlapping over $[s,\tau)$ for all $\tau\in(s,\tilde t)$. Thus by the continuity of $\text{vol}(S_{\eta,(s,\tau)})$ with respect to $\tau$,
\begin{align*}
\text{vol}(B_{\gamma_\eta}^{n-1})\mathcal{L}_{s}^{\tilde t}&=\lim_{\tau\to\tilde t^-}\left(\text{vol}(B_{\gamma_\eta}^{n-1})\mathcal{L}_{s}^{\tau}\right)=\lim_{\tau\to \tilde t^-}\text{vol}(S_{\eta,(s,\tau)})\\
&=\text{vol}(S_{\eta,(s,\tilde t)})\leq \text{vol}(S_{\eta,(s,t)})<\epsilon_1\\
&= \text{vol}(B_{\gamma_\eta}^{n-1})\frac{2\underline{L}_0}{L_1}\left(\pi-\sin^{-1}(\frac{L_1\gamma_\eta}{\underline{L}_0})\right)
\end{align*}
\[
\Rightarrow \mathcal{L}_{\check s}^{\tilde t}<\frac{2\underline{L}_0}{L_1}\left(\pi-\sin^{-1}(\frac{L_1\gamma_\eta}{\underline{L}_0})\right).
\]
If $\tilde t\neq t$, then since $\mathcal L_{s}^\tau$ is a continuous and strictly increasing function of $\tau$ (because of non-vanishing vector field), we can always pick $t^*\in (\tilde t,t)$ such that
\[
\mathcal{L}_s^{\tilde t}<\mathcal{L}_{s}^{t^*}<\frac{2\underline{L}_0}{L_1}\left(\pi-\sin^{-1}(\frac{L_1\gamma_\eta}{\underline{L}_0})\right).
\]
Hence by Corollary \ref{prop:sof} we conclude that the solution is non-self-overlapping up to time $t^*$, which contradicts maximality of $\tilde t$. Thus $\tilde t=t$.
\end{proof}

\subsection{Change of $V$ when passing through $\Omega_\eta$}\label{sec:4.3}

We now specify the threshold $\bar\epsilon$ in the statement of Theorem 1
\[
\bar\epsilon:=\min\{\epsilon_1,\epsilon_2\},
\]
where $\epsilon_1$ is defined in \eqref{overall_volume} and
\begin{equation}\label{e2}
\epsilon_2:=\text{vol}(B_{\gamma_\eta}^{n-1}) \, \frac{\underline{L}_0(b+\eta ac_1)^2}{\alpha \bar{L}_0b}.
\end{equation}
Note that when $\eta<1$, we have $\gamma_\eta>0$ and thus both $\epsilon_1,\epsilon_2$ are positive, which implies $\bar \epsilon >0$. In addition, when \eqref{turning_radius} is satisfied and $\epsilon<\bar \epsilon$, $S_{\eta,(s,t)}$ is non-self-overlapping for any $(s,t)\in X_\eta$ by Lemma \ref{cor_3}. Hence by Corollary \ref{lem:last} we have
\begin{equation}\label{con:teta2}
t-s\leq \frac{g\epsilon}{b}<\frac{g\bar \epsilon}{b}\leq \frac{g\epsilon_2}{b}=\frac{(b+\eta ac_1)^2}{\alpha \bar{L}_0b}.
\end{equation}
These inequalities in \eqref{con:teta2} are essential and will be repeatedly used in the proofs of subsequent lemmas. \\
We now show  that  $V$ will always decrease over any connected component of $X_\eta$ excluding those containing
boundary points  $\tau=0$ and $\tau= T$, if the latter exists. When the solution passes through the connected component containing the initial point $\tau=0$ or  $\tau= T$ then $V$ may actually increase but is bounded by a fixed value. This is summarized in the next lemma:

\begin{Lemma}\label{lem:5}
 Assume $\eta\in(0,1)$ satisfies \eqref{turning_radius} and $\epsilon<\bar \epsilon$. For any connected component $(s,t)\subset X_\eta$, define $\Delta V_{(s,t)}:=V(x(t))-V(x(s))$. Then
\begin{enumerate}

\item If $s=0$ and $V(x(0))< c_2-h\epsilon^{\frac{1}{n}}-g\epsilon$,
\[
\Delta V_{(s,t)}\leq \left\{\begin{array}{cl}
g\epsilon & \mbox{ if }t=T,\\
\frac{g}{2}\epsilon & \mbox{ if } t\neq T.
\end{array}\right.
\]
\item If $s>0$ and $V(x(s))< c_2-h\epsilon^{\frac{1}{n}}-\frac{g}{2}\epsilon$,
\[
\Delta V_{(s,t)}\leq \left\{\begin{array}{cl}
\frac{g}{2}\epsilon & \mbox{ if }t=T,\\
\phi(t-s) & \mbox{ if } t\neq T.
\end{array}\right.
\]
where
\begin{equation}\label{phi}
\phi(\tau):=\left\{\begin{array}{ccc}\frac{1}{4}\tau^2\alpha \bar{L}_0 -\tau \eta ac_1&\mbox{\rm if}&\tau \alpha \bar{L}_0<2(b+\eta ac_1),\\b\tau-\frac{(b+\eta ac_1)^2}{\alpha \bar{L}_0}&\mbox{\rm if}&\tau \alpha \bar{L}_0\geq 2(b+\eta ac_1).
\end{array}\right.
\end{equation}

\end{enumerate}
\end{Lemma}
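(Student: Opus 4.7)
The plan is to reduce the estimate to integrating a pointwise upper bound (an ``envelope'') on $\dot V(x(\tau))$ over $[s,t]$, built from three ingredients: (i) a time-Lipschitz property of $\dot V$ along the trajectory, (ii) the uniform cap $\dot V \leq b$, and (iii) the value of $\dot V$ forced at each endpoint by whether it is an ``interior'' endpoint ($0 < \tau_* < T$, so the closure of the component actually meets $\partial\Omega_\eta$) or a ``boundary'' endpoint ($\tau_* = 0$ or $\tau_* = T$). For (i), combining Lemma~\ref{lem:1} with $|f| \leq \bar L_0$ on $D$ gives $|\dot V(x(\tau_1)) - \dot V(x(\tau_2))| \leq \alpha \bar L_0 |\tau_1 - \tau_2|$ for all $\tau_1,\tau_2 \in [s,t]$. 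For (iii), at any endpoint $\tau_* \in \{s,t\}$ with $0 < \tau_* < T$, maximality of the component in $X_\eta$ and continuity of $\dot V + \eta a V$ force $x(\tau_*) \in \partial \Omega_\eta$, so $\dot V(x(\tau_*)) = -\eta a V(x(\tau_*)) \leq -\eta a c_1$; at $\tau_* = 0$ or $\tau_* = T$ only the uniform bound $\dot V(x(\tau_*)) \leq b$ is available.

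Combining (i)--(iii) yields the pointwise envelope
\[
\dot V(x(\tau)) \;\leq\; \min\bigl\{b,\; u_s + \alpha \bar L_0 (\tau - s),\; u_t + \alpha \bar L_0 (t - \tau)\bigr\},
\]
where $u_s$ (resp.\ $u_t$) equals $-\eta a c_1$ if $s$ (resp.\ $t$) is interior and $b$ otherwise. Integrating this envelope over $[s,t]$ and substituting the length bound $t-s \leq g\epsilon/b$ from \eqref{con:teta2} resolves each of the cases. The two-interior configuration produces a symmetric tent whose integral is exactly $\phi(t-s)$, with the split in \eqref{phi} arising from whether the tent peak $-\eta a c_1 + \tfrac{1}{2}\alpha \bar L_0(t-s)$ lies below or at the cap $b$. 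The one-interior/one-boundary configuration produces a ``ramp-plus-plateau'' whose integral is at most $b(t-s) - \tfrac{(b+\eta a c_1)^2}{2\alpha \bar L_0}$; combined with $b(t-s)\leq g\epsilon$ and with $\epsilon \leq \epsilon_2$ (equivalent to $(b+\eta a c_1)^2/(\alpha \bar L_0) \geq g\epsilon$ by \eqref{g} and \eqref{e2}), this collapses to $\tfrac{g}{2}\epsilon$. Finally, the two-boundary configuration uses only the cap, giving $b(t-s)\leq g\epsilon$.

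The hypotheses $V(x(0)) < c_2 - h\epsilon^{1/n} - g\epsilon$ and $V(x(s)) < c_2 - h\epsilon^{1/n} - \tfrac{g}{2}\epsilon$ are not used in the envelope calculation itself; they will be needed to guarantee that the bound just established does not push $V$ across the outer boundary $V = c_2 - h\epsilon^{1/n}$, so that the dichotomy ``$t = T$ vs.\ $t \neq T$'' is indeed the only one that arises. The main obstacle is justifying the envelope cleanly: it must simultaneously respect two one-sided Lipschitz constraints and the uniform cap, and one must identify the regime split in $\phi$ as the geometric transition between a purely triangular tent and a trapezoid that touches $b$. Once this is set up, each of the three integral computations is short, and the comparisons with $g\epsilon$ and $\tfrac{g}{2}\epsilon$ all reduce, via \eqref{g} and \eqref{e2}, to the single threshold $\epsilon \leq \epsilon_2$ already built into $\bar\epsilon$.
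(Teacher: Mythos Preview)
Your approach is exactly the paper's: derive the time-Lipschitz bound $|\dot V(x(\tau_1))-\dot V(x(\tau_2))|\le\alpha\bar L_0|\tau_1-\tau_2|$, cap by $b$, anchor each interior endpoint at $-\eta ac_1$, and integrate the resulting $\min$-envelope. The two-interior (tent/trapezoid) and two-boundary (pure cap) computations match the paper's Cases~4 and~1 verbatim, including the regime split that defines~$\phi$.

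There is one imprecision in your one-interior/one-boundary case. You assert that the ``ramp-plus-plateau'' integral is at most $b(t-s)-\tfrac{(b+\eta ac_1)^2}{2\alpha\bar L_0}$, but this formula is valid only when the ramp actually reaches the plateau, i.e.\ when $t-s\ge\tfrac{b+\eta ac_1}{\alpha\bar L_0}$. For shorter intervals the envelope is a pure ramp with integral $\tfrac12\alpha\bar L_0(t-s)^2-\eta ac_1(t-s)$; at $t-s=0$ this equals $0$ while your formula is strictly negative, so your single expression is \emph{not} an upper bound there. The paper (Cases~2 and~3) handles this by a separate sub-case, bounding the pure-ramp integral by $\tfrac12(b-\eta ac_1)(t-s)<\tfrac{b}{2}(t-s)\le\tfrac{g}{2}\epsilon$. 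The final conclusion $\tfrac{g}{2}\epsilon$ survives, but you need to state and check this second sub-case.

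A side remark: your claim that maximality alone forces $x(\tau_*)\in\partial\Omega_\eta$ at any endpoint $0<\tau_*<T$ is in fact correct (since $\tau_*<T$ already gives $x(\tau_*)\in D^*\subset\operatorname{int}D$, and a continuous path leaving the closed set $\Omega_\eta$ inside $\operatorname{int}D$ must hit $\{\dot V=-\eta aV\}$). The paper instead invokes the hypotheses on $V(x(0))$ and $V(x(s))$ to rule out $x(t)\in\partial D^*$ in Cases~2 and~4; this is a slightly redundant route to the same boundary condition, but harmless.
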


\begin{Remark}
We  observe that when  $(t-s) \alpha \bar{L}_0<2(b+\eta ac_1)$, $b$ does not appear in the bound for $\Delta V_{(s,t)}.$ This corresponds to the case when the bound $b$ is too loose, or the upper bound of $\dot{V}$ is unknown or not pre-determined. We have done studies of such less constrained almost Lyapunov functions previously and an example on which the theorem is applicable is not found yet.
\end{Remark}

\begin{proof}
The proof consists of four steps. \\

\noindent
{\bf Case 1: ($s=0$ and $t=T$)}. \\ Notice $\Delta V_{(s,t)}=\int_s^t\dot V(x(\tau))d\tau\leq \int_s^tbd\tau=b(t-s)\leq g\epsilon$ for any $(s,t)\subset X_\eta$. The last inequality comes from Corollary \ref{lem:last}. Thus $g\epsilon$ is an upper bound for $\Delta V_{(s,t)}$  for any connected components $(s,t)$ in $X_\eta$, in particular for the special case when both $s=0$ and $t=T$. \\

\noindent
{\bf Case 2: ($s=0$ and $t\neq T$)}. \\ In this case $t$ is finite. Since $(s,t)$ is a maximal interval, either $x(t)\in\partial\Omega_\eta$ or $x(t)\in\partial D^*$, the boundary of $D^*$. If it is the latter one, we are only interested in the case when $\Delta V_{(0,t)}>0$, that is, the case $V(x(t))=c_2-h$. Notice that in this case $\Delta V_{(0,t)}=V(x(t))-V(x(0))> (c_2-h\epsilon^{\frac{1}{n}})-(c_2-h\epsilon^{\frac{1}{n}}-g\epsilon)=g\epsilon$. This contradicts with the general upper bound of $g\epsilon$ on $\Delta V_{(s,t)}$ derived in Case 1. Thus we must have $x(t)\in\partial\Omega_\eta$ so $\dot V(x(t))=-\eta a V(x(t))\leq -\eta a c_1$. Next we compute a tighter upper bound on $\Delta V_{(0,t)}$. It follows from (\ref{eqn:lem1}) that for any $t_1,t_2\in [s,t]$,
\begin{align}\label{bound:Vdot}
|\dot{V}(x(t_1))-\dot{V}(x(t_2))|&\leq\alpha|x(t_1)-x(t_2)|\nonumber\\
&\leq\alpha\int_{t_1}^{t_2}|f(x(\tau))|d\tau \leq\alpha \bar{L}_0|t_1-t_2|.
\end{align}
Thus,    $\dot{V}$, when considered as  a function of time, is a Lipschitz function with  Lipschitz constant  $\alpha \bar{L}_0$. We can now estimate $\Delta V_{(0,t)}$ by collecting inequalities:
\begin{multline}\label{max1}
\Delta V_{(0,t)}=\int_{0}^{t}\dot V(x(\tau))d\tau\\
\mbox{with the bounds }  t <\frac{(b+\eta a c_1)^2}{\alpha \bar L_0 b}, \dot V(x(t))\leq-\eta a c_1, \dot V(x(t_0))\leq b,\\
|\dot V(x(t_1))-\dot V(x(t_2))|\leq \alpha \bar L_0|t_1-t_2|\ \forall t_0,t_1,t_2\in [0,t].
\end{multline}
The first bound comes from \eqref{con:teta2}  and the other bounds have been introduced earlier.  We claim that a necessary condition for the inequalities in \eqref{max1} to hold is:
\[
\dot V(x(\tau))\leq \min\{b,\alpha\bar L_0 (t-\tau)-\eta a c_1\},
\]
where the first bound $b$ in the min function above is immediate. The second bound in the min function comes from  $\dot V(x(t))\leq -\eta a c_1$ and the Lipschitz bound on $\dot V$. Hence we conclude that its integration gives an upper bound for $\Delta V_{(0,t)}$:
\begin{align*}
\Delta V_{(0,t)} &\leq  \int_0^{t}\min\{b,\alpha \bar L_0(t-\tau)-\eta a c_1\}d\tau\\
&=\int_0^{t}\min\{b,\alpha \bar L_0\tau-\eta a c_1\}d\tau
\end{align*}

A change of variable is used for deriving the second line above. Notice that the minimum function switches value when $b=\alpha \bar L_0\tau-\eta a c_1$, that is, when $\tau=\frac{b+\eta a c_1}{\alpha \bar L_0}$.
To estimate  the integral,  consider first the case when $t \geq \frac{b+\eta a c_1}{\alpha \bar L_0}$. In this case
\begin{align*}
\Delta V_{(0,t)}&\leq \int_0^{\frac{b+\eta a c_1}{\alpha \bar L_0}}(\alpha \bar L_0\tau-\eta a c_1)d\tau+\int_{\frac{b+\eta a c_1}{\alpha \bar L_0}}^{t} bd\tau\\
&=\frac{1}{2}\alpha \bar L_0\left(\frac{b+\eta a c_1}{\alpha \bar L_0}\right)^2-\eta a c_1\frac{b+\eta a c_1}{\alpha \bar L_0}+b\left( t-\frac{b+\eta a c_1}{\alpha \bar L_0}\right)\\
&=b t+\frac{(b+\eta a c_1)^2-2\eta a c_1(b+\eta a c_1)-2b(b+\eta a c_1)}{2\alpha \bar L_0}\\
&=b t-\frac{(b+\eta a c_1)^2}{2\alpha \bar L_0}
< b t-\frac{b t}{2} \leq \frac{g}{2}\epsilon.
\end{align*}
The two inequalities on the last line come from the inequalities in \eqref{con:teta2}. Now, if  $t  < \frac{b+\eta a c_1}{\alpha \bar L_0}$, there is no switch and we only need to evaluate one integral:
\begin{align*}
\Delta V_{(s,t)} &\leq   \int_0^t (\alpha \bar L_0 s-\eta a c_1)ds= \frac 1 2 \alpha \bar L_0 t^2 -\eta a c_1  t\\
&= \left(\frac 1 2 \alpha \bar L_0 t -\eta a c_1 \right) t<\left(\frac 1 2 \alpha \bar L_0 \left(\frac{b+\eta a c_1}{\alpha \bar L_0}\right) -\eta a c_1 \right)t\\
&=\frac{1}{2}(b-\eta ac_1)t<\frac{b}{2}t\leq \frac{g}{2}\epsilon.
\end{align*}
The last inequality above comes from \eqref{con:teta2}. Thus we have shown that $\frac{g}{2}\epsilon$ is an upper bound for $\Delta V_{(s,t)}$ when $s=0,t\neq T$. \\

\noindent
{\bf Case 3: ($s\neq 0,t=T$)}  \\ We start by considering any connected component $(s,t)$ such that $s\neq 0$. Again because it is maximal, we can only have $x(s)\in\partial \Omega_\eta$. This is because $x(s)\in\partial D^*$ is impossible as otherwise $x(\tau)\not\in D^*$ for some $\tau<s$. Thus we should have $\dot V(x(s))=-\eta a V(x(s))\leq -\eta a c_1$. Similar to \eqref{max1}, we obtain a system of inequalities
\begin{multline}\label{max2}
\Delta V_{(s,t)}=\int_{s}^{t}\dot V(x(\tau))d\tau\\
\mbox{with bounds  } t -s <\frac{(b+\eta a c_1)^2}{\alpha \bar L_0 b},\dot V(x(s))\leq-\eta a c_1, \dot V(x(t_0))\leq b,\\
|\dot V(x(t_1))-\dot V(x(t_2))|\leq \alpha \bar L_0|t_1-t_2|\ \forall t_0,t_1,t_2 \in[s,t],
\end{multline}
where the first bound again comes from \eqref{con:teta2}. The bounds are essentially  the same as \eqref{max1} but with the only difference that the  boundary condition is  $\dot V(x(s))\leq-\eta a c_1$ instead of $\dot V(x(t))\leq-\eta a c_1$. By symmetry considerations (change of variables $\tau'=t+s-\tau$ and then shift the time so $s=0$), the upper bound will be the same and, thus,  we have $\Delta V_{(s,t)}< \frac{g}{2}\epsilon$. This proves the special case when $\tau=T$, if $T< \infty$. \\

\noindent
{\bf Case 4: ($s\neq 0,t\neq T$)} \\ From the analysis in case 3 we see that $V(x(t))=V(x(s))+\Delta V_{(s,t)}< (c_2-h-\frac{g}{2}\epsilon)+\frac{g}{2}\epsilon=c_2-h$. Hence $x(t)\not\in\partial D^*$. So by maximality of $(s,t)$ we must have both $x(s),x(t)\in \partial \Omega_\eta$. Therefore, we have the system of inequalities
\begin{multline}\label{max3}
\Delta V_{(s,t)}=\int_{s}^{t}\dot V(x(\tau))d\tau\\
\mbox{with bounds } (t-s) < \frac{(b+\eta a c_1)^2}{\alpha \bar L_0 b}, \dot V(x(s)),\dot V(x(t))\leq-\eta a c_1,\\
\dot V(x(\tau))\leq b, |\dot V(x(t_1))-\dot V(x(t_2))|\leq \alpha \bar L_0 |t_1-t_2|\ \forall \tau,t_1,t_2 \in[s,t].
\end{multline}
By the same reasoning as we did  for \eqref{max1}, we have the following bound as a necessary condition:
\begin{equation}\label{eqn_trapezoid}
\dot V(\tau)\leq \min\{b,\alpha \bar L_0(\tau- s)-\eta a c_1,\alpha \bar L_0(t-\tau)-\eta a c_1\}
\end{equation}
for all $\tau\in[s,t]$. Hence
\begin{align*}
\Delta V_{(s,t)}&\leq\int_s^t \min\{b,\alpha \bar L_0(\tau - s)-\eta a c_1,\alpha \bar L_0(t - \tau)-\eta a c_1\}d\tau\\
&=\int_0^{t-s}\min\{b,\alpha \bar L_0 \tau -\eta a c_1,\alpha \bar L_0(t-s-\tau)-\eta a c_1\}d\tau.
\end{align*}

\begin{figure}
\centering
\begin{tikzpicture}[scale=0.9]
\tikzset{mypoints/.style={red}}
\def\ptsize{2.0pt}
\draw[thick,->][name path=xaxis] (-0.5,0) -- (6.5,0) node[anchor=north west] {$\tau$};
\draw[thick,->][name path=yaxis] (0,-4) -- (0,2.5) node[anchor=south east] {$\dot{V}$};
\draw[dashed][name path=b](0,2) -- (6.5,2);
\draw[dashed](0,-3) -- (6,-3);
\draw[name path=traj](0,-3) -- (2,2) -- (4,2) -- (6,-3);
\draw[dashed](6,-3) -- (6,0);
\path[red,name intersections={of=traj and xaxis}];
\coordinate [label=below right:$s$](1) at (0,0);
\coordinate [label=below right:$t$](4) at (6,0);
\coordinate [label=left:$b$](b) at (0,2);
\coordinate [label=left:$-\eta a c_1$](s) at (0,-3);
\foreach \p in {1,4}
\fill[mypoints] (\p) circle (\ptsize);
\end{tikzpicture}
\centering
\caption{ Upper bound of $\dot{V}$ vs. $\tau$ on the trajectory passing through $\Omega_\eta$ }\label{fig:vdott}
\end{figure}
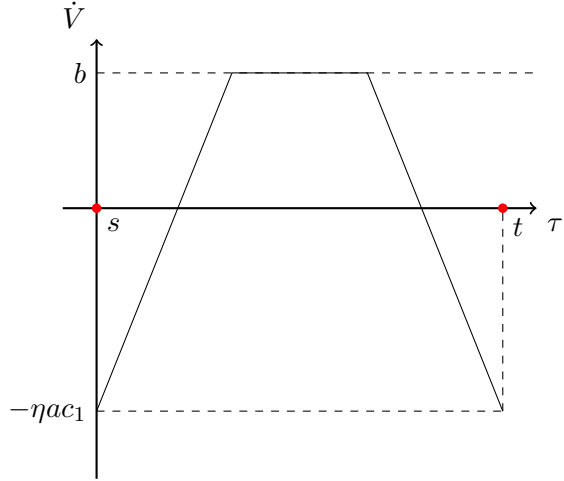

An illustration of the upper bound of $\dot V$ over $[s,t]$ is plotted in Figure \ref{fig:vdott}, corresponding to the trajectory in Figure \ref{fig:traj}. If  $t-s \leq 2\frac{b+\eta a c_1}{\alpha \bar L_0}$ the functions to be minimized in \eqref{eqn_trapezoid} have only one switching point at $\frac{t-s}{2}$, and
\begin{align*}
\Delta V_{(s,t)}&\leq \int_0^{\frac{t-s}{2}} (\alpha \bar L_0 \tau -\eta a c_1)d\tau+\int_{\frac{t-s}{2}}^{t-s}(\alpha \bar L_0(t-s-\tau)-\eta a c_1) d\tau \\
&=2\int_0^{\frac{t-s}{2}}(\alpha \bar L_0\tau-\eta a c_1) d\tau=\alpha\bar L_0(\frac{t-s}{2})^2-2\eta a c_1(\frac{t-s}{2})\\
&=\frac{1}{4}\alpha\bar L_0(t-s)^2-\eta a c_1(t-s).
\end{align*}
If $(t-s) > 2\frac{b+\eta a c_1}{\alpha \bar L_0}$, there are two switching points: $\tau = \frac{b+\eta a c_1}{\alpha \bar L_0}$ and $\tau = t-s-\frac{b+\eta a c_1}{\alpha \bar L_0}$ so
we have
\begin{align*}
\Delta V_{(s,t)}\leq& \int_0^{\frac{b+\eta a c_1}{\alpha \bar L_0}} (\alpha \bar L_0 \tau -\eta a c_1)d\tau +\int_{\frac{b+\eta a c_1}{\alpha \bar L_0}}^{t-s-\frac{b+\eta a c_1}{\alpha \bar L_0}}bd\tau\\
&+\int_{t-s-\frac{b+\eta a c_1}{\alpha \bar L_0}}^{t-s}(\alpha \bar L_0(t-s-\tau)-\eta a c_1)d\tau\\
=&2\int_0^{\frac{b+\eta a c_1}{\alpha \bar L_0}}(\alpha \bar L_0 \tau-\eta a c_1)d\tau +\int_{\frac{b+\eta a c_1}{\alpha \bar L_0}}^{t-s-\frac{b+\eta a c_1}{\alpha \bar L_0}}b\, d\tau \\
=&\alpha\bar L_0(\frac{b+\eta ac_1}{\alpha \bar L_0})^2-2\eta a c_1(\frac{b+\eta ac_1}{\alpha \bar L_0})\\
&+b\left((t-s)-2(\frac{b+\eta ac_1}{\alpha \bar L_0})\right)\\
=&b(t-s)-(\frac{b+\eta ac_1}{\alpha \bar L_0})^2.
\end{align*}
The two bounds are collected to be the $\phi$ function as stated in the lemma.

\end{proof}

Now since we have assumed that $b< a c_1$ in the beginning, we can always pick an $\eta$ sufficiently close to $1$ to guarantee that
\begin{equation}\label{assumption3}
b<\eta ac_1.
\end{equation}
From now on we will assume that $\eta$  satisfies both \eqref{turning_radius} and \eqref{assumption3}. Notice that for the solution outside  $\Omega_\eta$, the almost Lyapunov function $V$ clearly is decreasing; therefore, Lemma \ref{lem:5} also leads us to the following conclusion:
\begin{Corollary}\label{cor:2}
Consider a solution $x(\tau)$ with $V(x(0))< c_2-h\epsilon^{\frac{1}{n}}-g\epsilon$. Let $(s,t)$ be a maximal connected component of $X_{\eta}$ such that $s\neq 0,t\neq T$. Assume also   $b< \eta ac_1$ and $\epsilon<\bar \epsilon$. Then $\Delta V_{(s,t)}\leq \phi(t-s)<0$.
\end{Corollary}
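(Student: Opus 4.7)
The plan is to decompose the claim into two independent assertions: first, the inequality $\Delta V_{(s,t)}\le \phi(t-s)$, and second, the strict inequality $\phi(t-s)<0$. The first is not really new work; it is precisely the output of Case 4 in the proof of Lemma~\ref{lem:5}, since $s\neq 0$ and $t\neq T$ place us exactly in the second subcase of part~2 of that lemma. So the first step is simply to invoke Lemma~\ref{lem:5} and record the bound $\Delta V_{(s,t)}\le \phi(t-s)$ directly.

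The real content of the corollary is therefore the sign statement $\phi(t-s)<0$, which I intend to verify by case analysis on the two branches of $\phi$ defined in \eqref{phi}, using the time bound
\[
t-s<\frac{(b+\eta ac_1)^2}{\alpha\bar L_0 b}
\]
furnished by \eqref{con:teta2}, together with the standing hypothesis $b<\eta ac_1$.

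In the first branch, when $(t-s)\alpha\bar L_0<2(b+\eta ac_1)$, I would factor $\phi(t-s)=(t-s)\bigl[\tfrac14(t-s)\alpha\bar L_0-\eta ac_1\bigr]$ and use the branch condition to replace $\tfrac14(t-s)\alpha\bar L_0$ by an upper bound $\tfrac12(b+\eta ac_1)$, reducing the bracket to $\tfrac12(b-\eta ac_1)$, which is negative precisely because $b<\eta ac_1$. In the second branch, when $(t-s)\alpha\bar L_0\ge 2(b+\eta ac_1)$, I would substitute the upper bound on $t-s$ from \eqref{con:teta2} into $b(t-s)$ to cancel exactly against $\tfrac{(b+\eta ac_1)^2}{\alpha\bar L_0}$, giving $\phi(t-s)<0$.

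There is no real obstacle here; the corollary is essentially bookkeeping on top of Lemma~\ref{lem:5} and Corollary~\ref{lem:last}. The only subtlety worth flagging is that the strict (rather than non-strict) inequality in the second branch relies on the strict inequality in \eqref{con:teta2}, which in turn uses $\epsilon<\bar\epsilon\le \epsilon_2$; and in the first branch the strict inequality requires the assumption $b<\eta ac_1$ rather than merely $b\le \eta ac_1$. Both facts are already built into the hypotheses of the corollary.
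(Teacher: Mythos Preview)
Your argument for $\phi(t-s)<0$ is correct and matches the paper. However, there is a gap in the first step. Lemma~\ref{lem:5} part~2 carries the hypothesis $V(x(s))< c_2-h\epsilon^{1/n}-\tfrac{g}{2}\epsilon$, and you never verify it. The corollary's hypothesis constrains only $V(x(0))$, not $V(x(s))$ for an arbitrary component $(s,t)$; nothing rules out \emph{a priori} that $V$ has drifted close to $c_2-h\epsilon^{1/n}$ by time $s$ after the solution has crossed several earlier components of $X_\eta$. The paper closes precisely this gap by induction over the connected components: from $V(x(0))< c_2-h\epsilon^{1/n}-g\epsilon$ together with Case~2 of Lemma~\ref{lem:5} (the possible component at $\tau=0$) one obtains $V(x(s))< c_2-h\epsilon^{1/n}-\tfrac{g}{2}\epsilon$ at the first component with $s>0$; then, once $\Delta V_{(s,t)}\le\phi(t-s)<0$ is established for that component, the bound on $V(x(\cdot))$ propagates (using decay outside $\Omega_\eta$) to the left endpoint of the next one. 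The paper also addresses the possibility that $X_\eta$ has infinitely many components in a bounded interval, by arguing that only finitely many of them can contain a point with $\dot V>0$, so the induction reaches every relevant component.

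There may be a shortcut you could have taken: since $t\neq T$, maximality of $(s,t)$ inside $X_\eta$ already forces $x(t)\in\partial\Omega_\eta$, hence $\dot V(x(t))=-\eta aV(x(t))$, independently of any bound on $V(x(s))$; and that boundary condition is the only place Case~4 actually uses the hypothesis on $V(x(s))$. But you did not make that argument---you invoked Lemma~\ref{lem:5} as stated, and that requires checking its hypothesis.
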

\begin{proof} We prove by induction under an additional assumption that there are finitely many connected components in any bounded subset of  $X_{\eta}$. The extension to the general
case  will be justified at the end of the proof.

Firstly, if $(t-s) \alpha \bar{L}_0<2(b+\eta ac_1)$, \eqref{assumption3} implies $(t-s)\alpha \bar{L}_0<4\eta ac_1$ and hence the first line in \eqref{phi} implies  $\phi(t-s)=\frac{1}{4} (t-s)^2\alpha \bar{L}_0 - (t-s)\eta ac_1<0$. Otherwise, \eqref{con:teta2} implies $\phi(t-s)=b (t-s)-\frac{(b+\eta ac_1)^2}{\alpha \bar{L}_0}<0$. Thus we always have $\phi(t-s)<0$.

 Let $(s,t)$ be the first connected component of $X_{\eta}$ on the left with $s>0$. If it is the first connected component on the left (i.e. there is no connected component starting at $\tau=0$) then $V(x(s))<V(x(0))< c_2-h\epsilon^{\frac{1}{n}}-g\epsilon$. If there is a connected component starting at $\tau=0$, say the interval $(0,t_0)$, then still
\begin{align*}
V(x(s)) \leq V(x(0)) + \Delta V_{(0,t_0)}   <  (c_2-g\epsilon-h\epsilon^{\frac{1}{n}})+\frac{g}{2}\epsilon   =c_2-\frac{g}{2}\epsilon+h\epsilon^{\frac{1}{n}}.
\end{align*}
Either way, $V(x(s)) < c_2-\frac{g}{2}\epsilon+h\epsilon^{\frac{1}{n}}$. Hence by Lemma \ref{lem:5}, the base case is true and we have $\Delta V_{(s,t)}\leq \phi(t-s)<0$. Assume towards induction that at some connected component denoted also $(s,t)$ we have $V(x(s))< c_2-\frac{g}{2}\epsilon-h$ and $\phi(t-s)<0$. Then
at the next connected component $(s^+,t^+)$ we have
\begin{align*}
V(x(s^+))=\left(V(x(s^+))-V(x(t))\right)+\Delta V_{(s,t)}+V(x(s))\\
\leq \phi(t-s)+V(x(s))  <  c_2-\frac{g}{2}\epsilon-h\epsilon^{\frac{1}{n}}
\end{align*}
and again by Lemma \ref{lem:5} we have $\Delta V_{(s^+,t^+)}\leq\phi(t^+-s^+) < 0$.

Now, we address the case when $X_{\eta}$ is arbitrary, not necessarily consisting of finitely many connected components. Consider any connected component $(s,t) \subset X_{\eta}$ excluding those which contain boundary points.
If the corresponding arc of the solution does not enter $\Omega_0$, then $V$ could only decrease and we declare this component for the purpose of this proof to be outside of $X_{\eta}$. Now consider any connected component of $X_{\eta}$ for which the corresponding solution enters $\Omega_0$. Then, by Lemma \ref{lem:2} such a connected component
must have a lower bound on its length. Thus, the number of connected components where $V$ might increase has to be finite on a bounded time interval and the above proof by induction applies.

\end{proof}

\subsection{Exponential bound when repeatedly passing through $\Omega_\eta$}\label{subsection}
Corollary \ref{cor:2} tells us that the Lyapunov function decreases each time the solution crosses $\Omega_\eta$. This does not yet guarantee convergence to a smaller set. We now want to find an exponential type bound on $V$. Define $k(t):\mathbb R_+\to\mathbb R$ by
\[
k(t):=\left\{\begin{array}{cc}
-\frac{1}{t}\ln\left(1+\frac{1}{c_2}\phi(t)\right)&\mbox{ if }\phi(t)>-c_2,\\
K&\mbox{ if }\phi(t)\leq -c_2.
\end{array}\right.
\]
where $\phi$ is defined in \eqref{phi} and $K$ is a sufficiently large positive constant. Note that $\phi(t)$ is continuous near 0 and  $\phi(0)=0$, so we can define $k(0)=\frac{\eta a c_1}{c_2}$ by extension via L'H\^opital's rule.
In addition, define
\[
\lambda(\epsilon):=\min_{0\leq \delta \leq \epsilon}k\left(\frac{g\delta}{b}\right).
\]
By this definition, $\lambda(\epsilon)$ is a non-increasing function on $[0,\bar\epsilon)$. On the one hand, we see from the proof of Corollary \ref{cor:2} that $\phi (t)<0$ for all $t\in(0,\frac{(b+\eta ac_1)^2}{\alpha \bar{L}_0b})$ and thus we have $k(t)>0$ for all $t\in[0,\frac{(b+\eta ac_1)^2}{\alpha \bar{L}_0b})$. In addition, because $\frac{g\bar\epsilon}{b}\leq\frac{(b+\eta ac_1)^2}{\alpha \bar{L}_0b}$ as in \eqref{con:teta2}, $\lambda(\epsilon)$ is also positive on $[0,\bar\epsilon)$. According to Corollary \ref{lem:last}, $t-s\leq\frac{g\epsilon}{b}$, which implies
\[
k(t-s)\geq \min_{0\leq \delta \leq \epsilon}k\left(\frac{g\delta}{b}\right)=\lambda(\epsilon).
\]

Next, we have
\begin{align}
\nonumber V(x(t))&=\Delta V_{(s,t)}+V(x(s))=V(x(s))\left(1+\frac{\Delta V_{(s,t)}}{V(x(s))}\right)\\
\nonumber &\leq V(x(s))\left(1+\frac{\phi(t-s)}{c_2}\right)= V(x(s))e^{-k(t-s)(t-s)}\\
&\leq V(x(s))e^{-\lambda (\epsilon)(t-s)}\label{must_have}
\end{align}
for any connected component of $(s,t)\subset X_{\eta}$ that does not contain the end  points $\tau=0$ or $\tau=t_{\rm max}$. From the second line to the third line the inequality  $\Delta V_{(s,t)}\leq \phi(t-s)<0$ was used.
We also have
\[
\lambda(\epsilon)\leq \lambda(0)=k(0)=\frac{\eta a c_1}{c_2}<\eta a
\]
for all $\epsilon\in [0,\bar\epsilon)$. Thus, when the solution is inside $\Omega_\eta$, it has a decay rate slower than when the solution is in $D\backslash\Omega_\eta$, which has decay rate faster than $\eta a$. We can modify $\lambda(\epsilon)$ so that it is a positive, continuous, strictly decreasing function on $[0,\bar\epsilon)$ with $\lambda(0)<\eta a$ and so the  inequality \eqref{must_have} still holds.

As a result, for any $s,s'\in(0,T)\backslash{\rm int} X_\eta$, we have
\[
V(x(s')) \leq V(x(s))  e^{-\lambda(\epsilon)(s'-s)}.
\]
This exponential decaying bound suggests that $T$ cannot be infinite, otherwise for $s'\in(0,T)\backslash{\rm int} X_\eta$ and large enough we will have $V(x(s'))<c_1+h\epsilon^{\frac{1}{n}}$, implying $x(s')\not\in D^*$, and such $s'$ always exists when $T$ is infinite because the possible connected component containing $T$ has maximal length of $\frac{g\epsilon}{b}$.

Take an arbitrary $t\in[0,T]$. Recall that by  Lemma \ref{lem:5} for any connected components of $X_{\eta}$, even those that contain the end points $0$ and $t$, we still have the bound $\Delta V \leq \frac{g}{2}\epsilon$. Therefore, taking into account boundary components, we have
\begin{equation}\label{exp_bound1}
V(x(t)) \leq (V(x(0))+ \frac{g}{2}\epsilon) e^{-\lambda(\epsilon)(s'-s)}+\frac{g}{2}\epsilon
\end{equation}
where $s'=t$  if $t\not\in X_\eta$ , or $s'$ is the left boundary point of the connected component of $X_{\eta}$ containing $t$ otherwise; $s=0$ if $s\not\in X_\eta$, or $s$ is the right boundary point of the connected component of $X_{\eta}$ containing $0$ otherwise. From \eqref{exp_bound1} we directly see that
\begin{equation}\label{property2}
V(x(t)\leq V(x(0))+g\epsilon\quad\forall t\in[0,T].
\end{equation}
The first statement in the main Theorem follows from \eqref{property2} up to time $T$. In addition, by Corollary \ref{lem:last},
\[
s \leq  \frac{g\epsilon}{b},\quad t-s'\leq \frac{g\epsilon}{b}
\Rightarrow s'-s \geq t-2 \frac{g\epsilon}{b}.
\]
Substituting these expressions  into \eqref{exp_bound1}, we have
\begin{equation}\label{exp_bound2}
V(x(t))\leq e^{2\lambda(\epsilon)\frac{g\epsilon}{b}}(V(x(0))+\frac{g}{2}\epsilon)e^{-\lambda(\epsilon)t}+\frac{g}{2}\epsilon.
\end{equation}
This is also true for $t=T$. By definition of $T$ in \eqref{def:T} we see that $x(T)\in\partial D^*$ and because of the exponential decaying bound in \eqref{exp_bound2} so we must have $V(x(T))=c_1+h\epsilon^{\frac{1}{n}}$. The argument cannot proceed for $t>T$ because as $x(t)$ is outside of $D^*$, Lemma ~\ref{cor:5} cannot be applied and $B_{\gamma_\eta}(x(t))$ may not be contained in $D$ even if $\dot V(x(t))\leq -\eta aV(x(t))$; consequently the estimation of the sweeping volume, based on the bounds $\underline L_0,L_1$ etc. defined over $D$ is no longer valid. Nevertheless, once the solution returns to the lower boundary of $D^*$ such that $V(x(t))=c_1+h\epsilon^{\frac{1}{n}},$ it can be again treated as a new solution starting from $x(0) \in D$ with $\ V(x(0))< c_2-h\epsilon^{\frac{1}{n}}-g\epsilon$ and by the same analysis above we know that it can have an overshoot of $g\epsilon$ at most. This proves the second statement in the main theorem.

%

\section{Global uniform asymptotic stability  result by almost Lyapunov function}\label{sec:GUAS}
Our Theorem \ref{thm1} gives a local convergence property so that any solution in the domain converges to a lower level set. It is  often desirable to establish a  global convergence property so the solutions converge to a stable equlibrium. One typical stability property for autonomous systems is \textit{Global Uniform Asymptotic Stability} (GUAS), which means that the system is \textit{globally stable} in the sense that for any $\varepsilon>0$, there exists $\delta>0$ such that if $|x(0)|\leq \delta$, $|x(t)|\leq \varepsilon$ for all $t\geq 0$ and \textit{uniformly attractive} in the sense that for any $\delta>0,\kappa>0$, there exists $T=T(\delta,\kappa)$ such that whenever $|x(0)|\leq \kappa$, $|x(t)|\leq \delta$ for all $t\geq T$. We now try to transfer our study to a global result. To do that, instead of a fixed region $D$ defined by two constants $c_1,c_2$, we let the band-shaped region be defined for any $c>0$:
\begin{equation}\label{band_D}
D(c):=\{x\in\mathbb R^n:c\leq V(x)\leq 2c\}.
\end{equation}
Following the definitions of $ b,\bar L_0,\underline L_0,L_1, M_1,M_2$ from \eqref{def:b},\eqref{bar_L0},\eqref{underline_L0},\eqref{L_1},\eqref{M_1},\eqref{M_2} over the region $D(c)$, we see that now all of them are functions of $c$. We present a global uniform asymptotic stability result derived using an almost Lyapunov function:

\begin{Theorem}\label{thm:GUAS}
Consider a system (\ref{def:sys}) with a globally Lipschitz right-hand side $f$, and a function $V:\mathbb R^n\to[0,+\infty)$ which is positive definite and $C^1$ with globally Lipschitz gradient. In addition assume $V(x)\geq k_0|x|^2$ for some $k_0>0$ and all $x\in\mathbb R^n$. For any $c>0$, let the region $D(c)$ be defined via (\ref{band_D}) and assume all of them are compact. Let $\Omega\subset \mathbb R^n$ be a measurable set such that $\dot V(x)<-aV(x)$ for all $x\in \mathbb R\backslash\Omega$ with some $a>0$. Assume $\sup_{c>0}\frac{b(c)}{ac} < 1$ where $b(c)$ is defined via \eqref{def:b} over $D(c)$. Let $\underline L_0(c)$ be defined via \eqref{underline_L0} over $D(c)$. Then there exist $K_1,K_2,K_3>0$ such that if $\textnormal{vol}(\Omega^*(c))<\min\{K_1\underline L_0(c)^n,K_2c^{\frac{n-1}{2}}\underline L_0(c),K_3c^{\frac{n}{2}}\}$ for all $c>0$ where $\Omega^*(c)$ is the largest connected component of $\Omega\cap D(c)$, the system \eqref{def:sys} is GUAS.
\end{Theorem}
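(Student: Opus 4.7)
The idea is to apply Theorem \ref{thm1} iteratively on the dyadic bands $D(c_k)$ with $c_k = c_0\,2^{-k}$, so that any trajectory is funneled from a large sublevel set of $V$ into an arbitrarily small neighborhood of the origin. Before iterating, I would record the scalings of the local constants from Section \ref{proofoftheorem} as functions of $c$. Since $V(x)\geq k_0|x|^2$ forces $D(c)\subset\{|x|\leq\sqrt{2c/k_0}\}$, and since $f$ (with $f(0)=0$) and $\nabla V$ (with $\nabla V(0)=0$) are globally Lipschitz, one obtains $\bar L_0(c),M_1(c),\alpha(c)=O(\sqrt c)$ while $L_1,M_2$ are absolute constants. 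Consequently $\gamma_\eta(c)=O(\sqrt c)$, $h(c)=O(\sqrt c)$, and $g(c)\sim b(c)/(\underline L_0(c)\,c^{(n-1)/2})$. The assumption $\sup_c b(c)/(ac)<1$ supplies an $\eta_0<1$ with $b(c)<\eta_0 ac$ uniformly in $c$, so for each $c$ I would pick $\eta(c)\in(\eta_0,1)$---by choosing $\gamma_{\eta(c)}$ to be a fixed fraction of $\min\{\underline L_0(c)/L_1,\,(ac-b(c))/\alpha(c)\}$---so that both \eqref{turning_radius} and \eqref{assumption3} hold. This makes $\bar\epsilon(c)=\min\{\epsilon_1(c),\epsilon_2(c)\}$ positive and Theorem \ref{thm1} applicable on each band.

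Next I would verify that, for $K_1,K_2,K_3$ sufficiently small, the hypothesis $\text{vol}(\Omega^*(c))<\min\{K_1\underline L_0(c)^n,\,K_2 c^{(n-1)/2}\underline L_0(c),\,K_3 c^{n/2}\}$ yields two facts in every band: (i) $\epsilon(c):=\text{vol}(\Omega^*(c))<\bar\epsilon(c)$, because $\epsilon_1(c)$ scales like $\underline L_0^n/L_1^n$ with $\gamma_\eta\sim\underline L_0/L_1$, while $\epsilon_2(c)\gtrsim c^{(n-1)/2}\underline L_0/a$; and (ii) the endpoint and overshoot terms are strictly smaller than a fixed fraction $\delta$ of $c$, namely $h(c)\epsilon^{1/n}<\delta c$ (from $h(c)=O(\sqrt c)$ and $\epsilon<K_3 c^{n/2}$) and $g(c)\epsilon<\delta c$ (from $g(c)\epsilon\lesssim b(c)\epsilon/(\underline L_0(c)\,c^{(n-1)/2})\leq ac\cdot K_2/\textrm{const}$). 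I would then apply Theorem \ref{thm1} on the slightly widened band $[(1-2\delta)c_k,\,2c_k]$: the conclusion guarantees that any trajectory entering the band with $V(x_0)\leq 2c_k-h\epsilon^{1/n}-g\epsilon$ reaches $V(x(T_k))=(1-2\delta)c_k+h(c_k)\epsilon(c_k)^{1/n}<(1-\delta)c_k<c_k$ in a finite time $T_k$, with transient overshoot bounded by $g(c_k)\epsilon(c_k)<\delta c_k$; the trajectory is then handed off into the next band $D(c_{k+1})$ and the argument restarts.

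With the cascade in place, GUAS follows. Global stability comes from the per-band overshoot bound $V\leq(1+\delta)c_k$ combined with $V\geq k_0|x|^2$: a small initial condition yields small $V(x_0)$, hence small $c_k$ containing $x_0$, hence small $|x(t)|$ for all $t$. Global uniform attractivity follows by summing the dyadic times $T_k$: by the exponential estimate in the Remark after Theorem \ref{thm1}, $T_k\lesssim \log 2/\lambda(\epsilon(c_k))$, and since $\epsilon(c_k)/\bar\epsilon(c_k)$ stays uniformly below $1$ the rate $\lambda(\epsilon(c_k))$ is bounded away from zero, making $\sum_k T_k$ finite and bounded by a function of $V(x(0))$ alone. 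The main obstacle in the plan is the band-to-band handoff: Theorem \ref{thm1} as stated only delivers $V\leq c_1+h\epsilon^{1/n}+g\epsilon$, which need not lie strictly below $c_1$, so a naive dyadic cascade stalls. Resolving this requires the sharper identity $V(x(T))=c_1+h\epsilon^{1/n}$ mentioned in the Remark after Theorem \ref{thm1}, combined with the widened bands above to guarantee $V(x(T))<c_k$ strictly. Tracking uniformity of the various constants as $c\to0$ is routine but bookkeeping-heavy.
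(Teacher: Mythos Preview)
Your proposal is essentially correct and follows the same strategy as the paper: iterate Theorem~\ref{thm1} on shrinking bands, using the global Lipschitz assumptions and $V\geq k_0|x|^2$ to obtain the scalings $\bar L_0,M_1,\alpha=O(\sqrt c)$ and hence $\gamma_\eta=O(\sqrt c)$, and then show that the three-term volume bound forces both $\epsilon(c)<\bar\epsilon(c)$ and $h\epsilon^{1/n}+g\epsilon<\text{(fixed fraction of }c)$ uniformly in $c$. The paper differs only in bookkeeping: rather than a fixed dyadic cascade $c_k=c_0 2^{-k}$ with widened bands $[(1-2\delta)c_k,2c_k]$, it sets $c=\tfrac23 V(x(0))$ adaptively at each step, so that $V(x_0)=\tfrac32 c$ automatically sits well inside $D(c)=[c,2c]$, and the sharper bound $V(x(T))\leq c+h\epsilon^{1/n}<\tfrac54 c$ from the Remark yields $V(x(T))/V(x(0))<\tfrac56$ without any band widening. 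Your handoff device is a perfectly valid alternative; the adaptive re-centering just sidesteps the issue you flagged as the ``main obstacle.''

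One slip to fix: your claim that $\sum_k T_k<\infty$ is too strong and in fact false under your own estimate---if $\lambda(\epsilon(c_k))$ is uniformly bounded away from zero then $T_k\lesssim C$, so the infinite series diverges. Fortunately GUAS does not require this. Uniform attractivity only asks for a bound $T(\delta,\kappa)$ on the time to reach $B_\delta$ from $B_\kappa$, which corresponds to a \emph{finite} number of bands (of order $\log(\kappa/\delta)$) and hence a finite partial sum. The paper makes exactly this argument, counting $\lceil(\ln\kappa-\ln\delta)/(\ln 6-\ln 5)\rceil$ iterations.
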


Before giving the proof of Theorem \ref{thm:GUAS}, let us discuss the validity and some variations of the assumptions of this theorem first. If we know that the system is globally stable or the working space is some compact set in $\mathbb R^n$ instead of $\mathbb R^n$ itself, then we can replace global Lipschitzness in $f$ and $V_x$ by local Lipschitzness as it is sufficient for the existence of uniform $L_1,M_2$, which will be used in the proof.
The assumption $V(x)\geq k_0|x|^2$ is quite general since all quadratic Lypunov function satisfies this assumption. Other assumptions are merely same as or the general versions of the assumptions in Theorem \ref{thm1}. The non-vanishing assumption is also reflected in the theorem statement that if $f$ vanishes at any state which is different from the origin, $\underline L_0(c)=0$ for some $c>0$ and this theorem becomes inconclusive.
\begin{proof}
The idea of the proof is to repeatedly apply Theorem \ref{thm1} over the region $D(c)$ for any $c>0$ and show that $V(x(t))$ is bounded and will decrease by a factor of fixed factor each time.

First of all, globally Lipschitz $f$ and $V_x$ mean there exist $k_1,k_2>0$ such that
\begin{align*}
L_1\leq k_1,\\
M_2\leq k_2,
\end{align*}
where $L_1,M_2$ are the global Lipschitz constants of $f,V_x$, respectively. In addition, if $x^*$ is the maximizer of $|f(x)|$ in $D(c)$,
\[
\bar L_0(c)=\max_{x\in D(c)}|f(x)|=|f(x^*)|=|f(x^*)-f(0)|\leq L_1|x^*-0|\leq k_1|x^*|\leq k_1\sqrt{\frac{V(x^*)}{k_0}}\leq k_1\sqrt{\frac{2c}{k_0}}.
\]
By similar argument we also have $M_1\leq k_2\sqrt{\frac{2c}{k_0}}$. Thus, $\alpha=M_1L_1+M_2\bar L_0\leq 2k_1k_2\sqrt{\frac{2c}{k_0}}$. Using $\eta\in(0,1)$, \eqref{gamma} in Lemma \ref{lem:2} becomes
\[
\gamma_\eta=\frac{(1-\eta)ac}{\alpha+\eta a M_1}\geq \frac{(1-\eta)ac}{2k_1k_2\sqrt{\frac{2c}{k_0}}+ a k_2\sqrt{\frac{2c}{k_0}}}=\frac{(1-\eta)a\sqrt{k_0}}{\sqrt{2}(2k_1+a)k_2}c^{\frac{1}{2}}=(1-\eta)Kc^{\frac{1}{2}}=:\gamma^*,
\]
where $K:=\frac{a\sqrt{k_0}}{\sqrt{2}(2k_1+a)k_2}$ is a constant. For each $c>0$, pick $\eta(c)\in(\frac{1}{2},1)$ such that
\begin{equation}\label{1-eta}
1-\eta(c)<\min\left\{\frac{\underline L_0(c)}{2k_1K\sqrt{c}},1-\sup_{c>0}\frac{b(c)}{ac}\right\},
\end{equation}
This can be done as the arguments in the min function on the right side of \eqref{1-eta} are always positive (the positiveness of the second argument is given by the theorem assumption). This also means that,
\begin{equation}\label{gamma*}
\gamma^*<\min\left\{\left(1-\sup_{c>0}\frac{b(c)}{ac}\right)Kc^{\frac{1}{2}},\frac{\underline L_0}{2k_1}\right\},
\end{equation}

which tells us that by a proper choice of $\eta(c)$ satisfying \eqref{1-eta}, $\gamma^*$ will be the minimum of two increasing functions of $c,\underline L_0$, respectively. Also by definition we know $\gamma^*\leq\gamma_\eta$, so the result in Lemma \ref{cor:5} holds for $\gamma^*$ as well. In addition, the inequality between $\gamma^*$ and $\frac{\underline L_0}{2k_1}$ in \eqref{gamma*} tells that
\[
\gamma^*<\frac{\underline L_0}{2k_1}\leq \frac{\underline L_0}{2L_1}<\frac{\underline L_0}{L_1},
\]
and the inequality between $1-\eta(c)$ and $1-\sup_{c>0}\frac{b(c)}{ac}$ in \eqref{1-eta} tells that
\[
\eta(c)>\sup_{c>0}\frac{b(c)}{ac}\Rightarrow b(c)<\eta(c)ac\quad\forall c>0.
\]
Therefore the bound \eqref{gamma*} guarantees that both \eqref{turning_radius} and \eqref{assumption3} are satisfied; $\gamma^*$ is indeed a valid sweeping tube radius and hence all the subsequent results still follow if we replace every $\gamma_\eta$ by $\gamma^*$. Now define
\begin{align}
\epsilon_3:&=\frac{\underline L_0(c)\text{vol}(B^{n-1}_{\gamma^*})}{4b}c,\label{e3}\\
\epsilon_4:&=\text{vol}(B_{r(c)}^n),\quad r(c)={\sqrt{\frac{k_0c}{32k_2^2}}}.\label{e4}
\end{align}
Then $\epsilon<\epsilon_3$ with $g$ substituted by its definition \eqref{g} implies
\[
g\epsilon<\frac{b\epsilon_3}{\underline L_0 \text{vol}(B^{n-1}_{\gamma^*})}< \frac{1}{4}c.
\]
On the other hand, $\epsilon<\epsilon_4$ with $h$ substituted by its definition \eqref{h} implies
\[
h\epsilon^{\frac{1}{n}}=M_1\left(\frac{\epsilon}{\chi(n)}\right)^{\frac{1}{n}}< k_2\sqrt{\frac{2c}{k_0}}\left(\frac{\epsilon_4}{\chi(n)}\right)^{\frac{1}{n}}=k_2\sqrt{\frac{2c}{k_0}}r(c)<\frac{1}{4}c.
\]
So we have both $g\epsilon$ and $h\epsilon^{\frac{1}{n}}$ bounded from above by $\frac{1}{4}c$ when $\epsilon$ is small enough.

Now for any initial state $x(0)\in \mathbb R^n$, we let $c=\frac{2}{3} V(x(0))$. Then $x_0\in D(c)$ and we try to apply Theorem \ref{thm1} on it. Notice that $V(x_0)=\frac{3}{2}c<2c-h\epsilon^{\frac{1}{n}}-g\epsilon$, thus the initial state satisfies the hypothesis. Hence we conclude from Theorem \ref{thm1} that for $\epsilon$ small enough, $V(x(t))\leq V(x(0))+g\epsilon\leq \frac{7}{4}c$ for all $t\geq 0$ and $V(x(t))\leq c+h\epsilon^{\frac{1}{n}}< \frac{5}{4}c$ for some $t\leq T(c,\epsilon)$. The global stability part is given by the first conclusion by letting $\delta =\frac{7}{6}\varepsilon$. The second conclusion tells that
\[
\frac{V(x(t))}{V(x(0))}<\frac{\frac{5}{4}c}{\frac{3}{2}c}=\frac{5}{6}
\]
Thus over each iteration $|x(t)|$ is decreased at least by a factor of $\frac{5}{6}$, in time at most $T$. We then reset time $t$ to be the initial time and can repeat the same argument. Thus while given $\delta$ and $\kappa$, the total number of iterations is $\lceil\frac{\ln\kappa-\ln\delta }{\ln 6-\ln 5}\rceil$ for a solution that starts from $\bar B_ \kappa^n(0)$ and converges to  $\bar B_\delta^n(0)$. The total time needed is bounded by the summation of $T(c,\epsilon)$'s of each iteration and hence for given $\epsilon$, it only depends on $\kappa,\delta$.

It remains to find how small $\epsilon$ needs to be; that is, find an expression of $\bar\epsilon$, which is the common lower bound of $\epsilon_1,\epsilon_2,\epsilon_3,\epsilon_4$, in terms of $c,\underline L_0$. Recall from \eqref{overall_volume} and \eqref{e2} that we have

\[
\epsilon_1=\text{vol}(B_{\gamma^*}^{n-1})\frac{2\underline{L}_0}{L_1}\left(\pi-\sin^{-1}(\frac{L_1\gamma_\eta^*}{\underline{L}_0})\right)\geq  \text{vol}(B_{\gamma^*}^{n-1})\underline{L}_0\frac{2}{k_1}\left(\pi-\sin^{-1}(\frac{1}{2})\right),
\]

\[
\epsilon_2=\text{vol}(B_{\gamma_\eta}^{n-1})\frac{\underline{L}_0(b+\eta ac)^2}{\alpha \bar{L}_0b}\geq \text{vol}(B_{\gamma^*}^{n-1})\underline{L}_0\frac{4b\eta ac}{2k_1^2k_2\frac{2c}{k_0}b}> \text{vol}(B_{\gamma^*}^{n-1})\underline{L}_0\frac{a}{2k_1^2k_2},
\]
where on the second line the assumption $\eta>\frac{1}{2}$ is used. Meanwhile, from \eqref{e3} we have
\[
\epsilon_3=\frac{\underline L_0(c)\text{vol}(B^{n-1}_{\gamma^*})}{4b}c> \text{vol}(B_{\gamma^*}^{n-1})\underline{L}_0\frac{1}{4a}.
\]
It is observed from the above inequalities that a common lower bound of $\epsilon_1,\epsilon_2,\epsilon_3$ is of the form $K_0\text{vol}(B_{\gamma^*}^{n-1})\underline{L}_0$ with some constant $K_0>0$. Recall from \eqref{gamma*} that $\gamma^*$ is chosen to be the minimum between two linear increasing functions of $\underline L_0,c^{\frac{1}{2}}$, respectively. Thus $\text{vol}(B_{\gamma^*}^{n-1})$ is the minimum between two linear increasing functions of $\underline L_0^{n-1},c^{\frac{n-1}{2}}$, respectively. As a result,
\[
\min\{\epsilon_1,\epsilon_2,\epsilon_3\}\geq\min\{K_1\underline L_0^n,K_2c^{\frac{n-1}{2}}\underline L_0\}
\]
In addition, \eqref{e4} means that $\epsilon_4$ is a linear function of $c^{\frac{n}{2}}$. Put them together, we have
\[
\bar \epsilon:=\min\{K_1\underline L_0^n,K_2c^{\frac{n-1}{2}}\underline L_0,K_3c^{\frac{n}{2}}\}\leq \min\{\epsilon_1,\epsilon_2,\epsilon_3,\epsilon_4\}
\]
This $\bar \epsilon$ is the upper bound of $\epsilon$ in Theorem \ref{thm1}. As a result, as long as $\textnormal{vol}(\Omega^*(c))< \bar\epsilon$ for all $c>0$ where $\Omega^*(c)$ is the largest connected component of $\Omega\cap D(c)$, the system \eqref{def:sys} is GUAS.
\end{proof}

\section{Example and discussion}\label{nontrivialexample}

\subsection{Example}

The system (\ref{def:sys}) is explicitly defined as follows:
\begin{equation}\label{def:sys2}
\left(\begin{array}{c}\dot{x_1}\\\dot{x_2}\end{array}\right)=f(x)=
\left(\begin{array}{cc}-\lambda(x)&\mu\\-\mu&-\lambda(x)\end{array}\right)\left(\begin{array}{c}x_1\\x_2\end{array}\right)
\end{equation}
with
\[
\lambda(x)=1.01\min\left\{\frac{|x-x_c|}{\rho},1\right\}-0.01,\quad x_c=(0.8,0)^\top,\mu=2,\rho=0.01.
\]
\begin{figure}
\centering
  \includegraphics[scale=0.3]{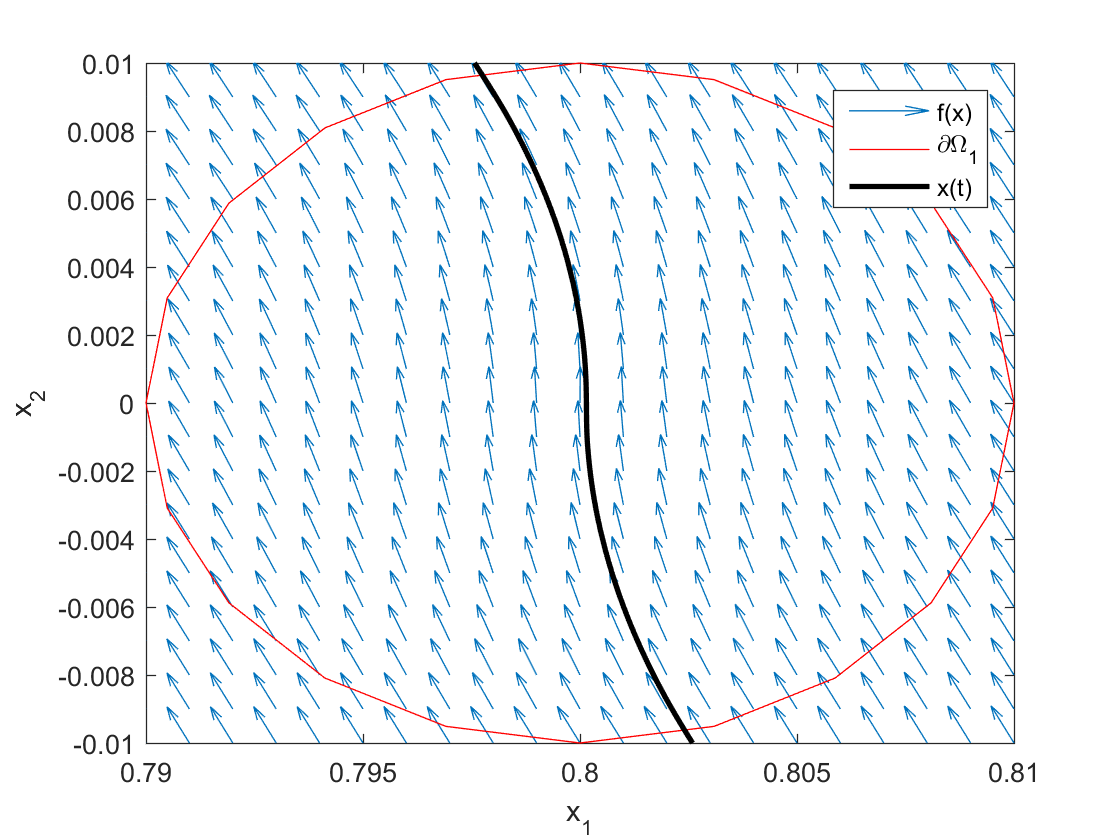}
  \caption{Local behavior of the example system}
  \label{example}
\end{figure}
The relevant part of the phase portrait for the vector field $f(x)$ with a solution $x(t)$ passing through is shown in Figure \ref{example}. Notice that the spiral-shaped vector field is distorted in the region of $B_\rho(x_c)$. The solution $x(t)$ passing through this region will temporarily move away from the origin when passing through $B_\rho(x_c)$. More explicitly, we consider the function
\begin{equation*}
V=|x|^2=x_1^2+x_2^2
\end{equation*}
as a candidate Lyapunov function.
Then
\begin{equation}\label{ex_Vdot}
\dot V(x)=2(x_1\dot x_1+x_2\dot x_2)=-2\lambda(x)(x_1^2+x_2^2).
\end{equation}
Notice that $\lambda(x)=1$ everywhere except in $B_\rho(x_c)$. Outside this ball  $B_\rho(x_c)$ the system is linear and satisfies the decay condition $\dot V=-2V$. When $x(t)$ is very close to $x_c$, $\lambda(x)$ becomes negative and $\dot{V}$ becomes positive. Hence for this system $\Omega_0\neq\emptyset$ and $V$ is not a Lyapunov function for this system but only an almost Lyapunov function. Nevertheless, we will  show by our theorem that convergence to $0$ takes place as the effect of $\Omega$ is not strong. To do so, choose $d_1=0.7,d_2=1,c_1=d_1^2,c_2=d_2^2$. We find that
\begin{align*}
|f(x)|&=\sqrt{f(x)^\top f(x)}\\
&=\sqrt{\begin{pmatrix}
x_1&x_2
\end{pmatrix}\begin{pmatrix}
-\lambda(x)&-\mu\\\mu&-\lambda
\end{pmatrix}\begin{pmatrix}
-\lambda(x)&\mu\\-\mu&-\lambda
\end{pmatrix}\begin{pmatrix}
x_1\\x_2
\end{pmatrix}}\\
&=\sqrt{(\lambda^2(x)+\mu^2)(x_1^2+x_2^2)}\\
&=|x|\sqrt{(\lambda^2(x)+\mu^2)}
\end{align*}
Hence on the set $D=\{x:d_1\leq|x|\leq d_2\}$,
\[
\bar{L}_0=d_2\times\sqrt{\max\lambda(x)^2+\mu^2}=\sqrt{5},
\]
\[
\underline{L}_0=d_1\times\sqrt{\min\lambda(x)^2+\mu^2}=1.4.
\]
The parameter $L_1$ was computed  numerically to be $90.78$. Since $V_x(x)=2(x_1,x_2)$,
\[
M_1=2d_2=2,
\]
\[
M_2=2.
\]
In addition, from \eqref{ex_Vdot} we see that
\[
b=-2\min_{x\in D}\lambda(x)|x|^2.
\]
The minimum is achieved at $x=x_c$ and it is computed to be
\[
b=0.0128.
\]
Naturally pick $a=2$ so that $\Omega=B_\rho(x_c)$. Thus,
\[
 \epsilon=\text{vol}({\Omega})=\pi\rho^2\approx 3.14\times10^{-4}.
 \]
 Also note that this $\Omega$ is completely inside $D$.\\
Pick $\eta=0.6$. It can be calculated that
\[
\alpha=M_1L_1+\bar L_0M_2\approx 186,
\]
\[
\gamma_\eta=\frac{(1-\eta)ac_1}{\alpha+\eta a M_1}\approx 0.0021\leq0.0154=\frac{\underline{L}_0}{L_1},
\]
so \eqref{turning_radius} is satisfied. In addition,
\[
\eta a c_1=0.588> b,
\]
so \eqref{assumption3} is also satisfied. Hence $\eta=0.6$ is large enough. We can then compute $\bar\epsilon$:
\begin{align*}
&\epsilon_1=4\gamma_\eta\frac{\underline{L}_0}{L_1}\left(\pi-\sin^{-1}(\frac{L_1\gamma_\eta}{\underline{L}_0})\right)\approx 3.86\times 10^{-4}.\\
&\epsilon_2=\frac{2\gamma_\eta \underline{L}_0(b+\eta ac_1)^2}{\alpha \bar{L}_0b}\approx 3.95\times 10^{-4},\\
&\Rightarrow \bar\epsilon=\max\{\epsilon_1,\epsilon_2\}=3.95\times 10^{-4}
\end{align*}
Indeed we have
\[
\epsilon<\bar\epsilon.
\]
So all the hypothesis in Theorem 1 hold. Meanwhile,
\begin{align*}
&h=M_1\gamma_\eta\approx0.0042\\
&g\epsilon=\frac{b\text{vol}(\Omega_1)}{2\gamma_\eta\underline{L}_0}\approx 6.9\times 10^{-4}\ll c_2-c_1
\end{align*}
The conclusions in Theorem 1 tell us that the system will converge to the set $\{x:V(x)\leq c_1+h+g\epsilon\}\approx B_{0.7044}(0)$ if it starts at $x_0$ with $V(x_0)\leq c_2-h-g\epsilon\approx0.9951$. \\

\subsection{Discussion of the Example}
Firstly, because our $V$ is chosen to be quadratic and we know from the earlier discussion in Section \ref{subsection} that the convergence of $V$ is exponential, we can further conclude that the convergence of the solution to the ball $B_{0.7044}(0)$ is exponentially fast. In addition, since $\dot V(x)=-2V(x)$ for all $x\in B_{0.7044}(0)\cup\{x:V(x)>0.9951\}$, the system is in fact globally exponentially stable.

It is important to note, as discussed earlier, that in this example $\Omega_0\neq\emptyset$. By continuity of $\dot V$ as a function of states, we know that there will be $x'\in\Omega$ such that $V_x(x')\cdot f(x')=\dot V(x')=0$ (which is in fact on $\partial \Omega_0$). If we don't require the vector field to be non-vanishing, then since $V_x(x)=2x\neq 0$ for all $x\in D$, we either have $f(x')=0$ or $V_x(x')$ is orthogonal to $f(x')$. In the first case $x'$ is an equilibrium of the system and we will have a solution $x(t)\equiv x'$, which would not converge to a smaller set and hence the conclusion in Theorem \ref{thm1} is no longer true. This indicates that the additional assumption of non-vanishing (which results in the positive bound $\underline{L}_0$) is indeed crucial to establishing the convergence result.

Recall that the significance of our main theorem appears when there are multiple ``bad regions'' with the volume of each of them  bounded above. For instance, by modifying the vector field of the above example such that $\Omega$ consists of multiple $B_\rho(x_i)$ regions distributed in D with $|x_i|=0.8$ for all $i$, our main theorem is still applicable and will lead to the same conclusion.

Nevertheless, the obtained $\bar\epsilon$ appears to be rather conservative. One can observe in the above example that the radius of the sweeping ball is quite small as $\gamma_\eta\approx\frac{1}{5}\rho$; as a result, $\bar\epsilon$ which is proportional to $\text{vol}(B_{\gamma_\eta}^{n-1})$ becomes very small. It is not hard to see from the proofs of Lemma \ref{lem:0}, \ref{lem:1} and \ref{lem:2} that $\gamma_\eta$ is a very coarse bound on the radius of the largest ball that is contained in $\Omega$. More careful analysis can be done on tightening $\gamma_\eta$; however, this may require additional information about system dynamics. Our current assumptions on the system, on the other hand, are rather general.

In addition, once $\eta$ is chosen, a sweeping ball of constant radius is employed for the analysis. We can make $\gamma_\eta$ time-varying based on the level set of $\Omega_\eta$ that $x$ is in. Since it is known that the radius of the sweeping ball becomes larger when $\dot{V}$ becomes positive, $\bar \epsilon$ will be larger and this modification should yield a better result. However, difficulties arise in converting the bound \eqref{overall_length} on the length of a particular trajectory to a \eqref{overall_volume}-like bound on the volume of $\Omega_1$.

\section{Conclusion}\label{conclusion}
We presented a result (Theorem \ref{thm1}) which establishes convergence of system trajectories from a given set to a smaller set, based on an almost Lyapunov function which is known to decrease along solutions on the complement of a set of small enough volume. The result is established by tracking the change of Lyapunov function value when the solution passes through this set of small volume and finding an upper bound on the volume swept out by a tubular neighborhood along the solution before it can achieve an overall gain in its Lyapunov function value. With some knowledge of the structure of the system dynamics, it is shown that convergence will still hold even if there is some temporary gain in Lyapunov function value. We have also developed Theorem \ref{thm:GUAS} that under mild assumptions of the system, the result of Theorem \ref{thm1} can be iterated so that when the volume where $\dot V$ is not negative enough is small, the system can still be shown to be GUAS.

\section{Acknowledgement}
 S.L. and D.L.  were supported by the NSF grant CMMI-1662708 and the AFOSR grant FA9550-17-1-0236. V.Z. was partially supported by  a grant from the Simons Foundation \# 278840.

\bigskip

\bibliography{reference}
\bibliographystyle{IEEEtran}

\appendix
\begin{center}
\begin{Large}
{\bf Appendix }
\end{Large}
\end{center}

\section{Previous result}\label{earlier_result}
We provide a slightly different result in this section. In this case the region of interest is defined as:
\begin{equation}\label{modified_D}
D:=\{x\in \mathbb R^n:V(x)\leq c\}
\end{equation}
Notice that in this case $D$ is defined with the origin included, in contrast to the the one defined for Theorem 1 which excludes a neighborhood of origin. Here is the theorem statement:
\begin{Theorem} \label{thm2}\cite{DL14}
Let $\rho:(0,+\infty)\to(0,+\infty)$ be the relation such that
\[
\text{vol}(B_{\rho(\epsilon)})=\epsilon
\]
Consider the system \eqref{def:sys} with a locally Lipschitz right-hand side $f$, and a function $V$ which is positive definite and $\mathcal C^1$ with locally Lipshitz gradient. Let the region $D$ be defined via \eqref{modified_D} and assume that it is compact. Assume that \eqref{lcvdot} holds. Then there exist a constant $\bar \epsilon>0$ and a continuous, strictly increasing function $\bar R$ on $[0,\bar \epsilon]$ with $\bar R(0)=0$ such that for every $\epsilon\in(0,\bar\epsilon)$, if $\text{vol}(\Omega)<\epsilon$, then for every initial condition $x_0\in D$ with
\[
V(x_0)<c-2M_1\rho(\epsilon)
\]
where $M_1$ is defined by \eqref{M_1}, the corresponding solution $x(\cdot)$ of \eqref{def:sys} with $x(0)=x_0$ has the following properties:
\begin{enumerate}
\item $V(x(t))\leq V(x_0)+2M_1\rho(\epsilon)$ for all $t\geq 0$ (and hence $x(t)\in D$ for all $t\geq 0$).
\item $V(x(T))\leq \bar R(\epsilon)$ for some $T\geq 0$.
\item $V(x(t))\leq \bar R(\epsilon)+2M_1\rho(\epsilon)$ for all $t\geq T$.
\end{enumerate}
\end{Theorem}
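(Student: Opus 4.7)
The plan is to execute the perturbation argument sketched in the introduction of this paper: compare the actual solution $x(\cdot)$ with a nearby auxiliary solution $y(\cdot)$ whose forward orbit interacts only in a controlled way with the bad set $\Omega$, and then transfer estimates via continuous dependence on initial data combined with the Lipschitz bound $|V(x_1)-V(x_2)|\leq M_1|x_1-x_2|$ (available by the same argument as in Lemma \ref{lem:0}). No tube/geometry construction is used here; the result is softer than Theorem \ref{thm1} because it allows $D$ to include the origin and drops the non-vanishing condition on $f$.

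The first step is the measure-theoretic construction of $y_0$. Since $V(x_0)<c-2M_1\rho(\epsilon)$, the ball $B_{2\rho(\epsilon)}(x_0)$ lies in $D$ and has volume $2^n\epsilon$, which strictly exceeds $\textnormal{vol}(\Omega)$. Writing $\phi_s$ for the flow of $f$ and exploiting the Jacobian bound $|\det D\phi_{-s}|\leq e^{nL_1 s}$, a Fubini calculation gives
\[
\int_{B_{2\rho(\epsilon)}(x_0)}\int_0^T \mathbf 1_\Omega(\phi_s(y))\,ds\,dy
=\int_0^T \textnormal{vol}\bigl(\phi_{-s}(\Omega)\cap B_{2\rho(\epsilon)}(x_0)\bigr)\,ds
\leq \epsilon\,\frac{e^{nL_1 T}-1}{nL_1}.
\]
Pigeonholing over $B_{2\rho(\epsilon)}(x_0)$ produces a $y_0$ whose trajectory $y(\cdot)$ spends only a small total time $\sigma=\sigma(\epsilon,T)$ in $\Omega$ on $[0,T]$. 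Along $y(\cdot)$ one has $\dot V<-aV$ off $\Omega$ and $\dot V\leq b$ on $\Omega$, giving an estimate of the form $V(y(t))\leq V(y_0)e^{-a(t-\sigma)}+O(\sigma b)$. Since $|y_0-x_0|\leq 2\rho(\epsilon)$ the Lipschitz bound yields $V(y_0)\leq V(x_0)+2M_1\rho(\epsilon)$, and Gronwall gives $|x(t)-y(t)|\leq 2\rho(\epsilon)e^{L_1 t}$. Combining via the triangle inequality,
\[
V(x(t))\leq \bigl(V(x_0)+2M_1\rho(\epsilon)\bigr)e^{-a(t-\sigma)}+O(\sigma b)+2M_1\rho(\epsilon)e^{L_1 t}.
\]

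Properties 1--3 now follow by selecting the horizon $T=T(\epsilon)$ so as to balance the decay rate $a$ against the displacement growth rate $L_1$. The transient bound (property 1) comes from verifying that the right-hand side above never exceeds $V(x_0)+2M_1\rho(\epsilon)$ for $t\in[0,T]$, using that $e^{-a(t-\sigma)}\leq 1$ and that $2M_1\rho(\epsilon)e^{L_1 t}$ stays below $2M_1\rho(\epsilon)$ up to a suitable time cutoff. Choosing $T$ at the first time the decay term matches the displacement term produces $V(x(T))\leq \bar R(\epsilon)$ for an explicit $\bar R(\epsilon)$ — essentially the infimum over $T$ of the maximum of the two competing exponentials — which one checks is continuous, strictly increasing, and vanishes as $\epsilon\to 0$. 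Property 3 is obtained by re-applying property 1 with $x(T)$ as the new initial condition.

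The principal obstacle is the coupled choice of $r=2\rho(\epsilon)$ and $T$ in the measure-theoretic step: the factor $e^{nL_1 T}$ in the Jacobian bound forces $T$ to scale only logarithmically with $1/\rho(\epsilon)$, which in turn pins down the shape of $\bar R(\epsilon)$. Verifying that the resulting $\bar R$ is genuinely continuous and strictly increasing on $[0,\bar\epsilon]$ (rather than merely bounded) requires explicit tracking of the crossover between $V(x_0)e^{-aT}$ and $\rho(\epsilon)e^{L_1 T}$, and possibly a monotone reparametrization of the envelope to absorb non-monotonic behavior from the $\sigma b$ term.
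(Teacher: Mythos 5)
Your overall skeleton (perturb the initial condition, use $|V(x_1)-V(x_2)|\leq M_1|x_1-x_2|$ and Gronwall to transfer estimates back to $x(\cdot)$) is the same perturbation strategy that the paper attributes to \cite{DL14}; note, however, that this paper does not reprove Theorem \ref{thm2} — it only records that in \cite{DL14} the comparison trajectory is chosen to lie \emph{entirely} in $D\backslash\Omega$, so that it obeys $\dot V<-aV$ throughout the comparison window and the only losses are the displacement terms $2M_1\rho(\epsilon)$, which vanish as $\epsilon\to0$. Your proposal replaces that avoidance property by a comparison trajectory merely guaranteed (via the Fubini/volume-transport pigeonhole) to spend little time in $\Omega$, and this substitution is where the argument breaks.

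The gap is quantitative and, I believe, fatal as stated: since the ball you average over must have radius $2\rho(\epsilon)$ (to keep the displacement at $2M_1\rho(\epsilon)$ and the ball inside $D$), its volume is $2^n\epsilon$, so $\epsilon$ cancels in your pigeonhole estimate. The guaranteed total time in $\Omega$ up to horizon $T$ is $\sigma(T)=\bigl(e^{nL_1T}-1\bigr)/(2^n nL_1)$, which is independent of $\epsilon$ and grows exponentially in $T$. Hence the $O(\sigma b)$ term in your final envelope does not tend to $0$ as $\epsilon\to0$ for any fixed $T$, and blows up if you let $T\to\infty$ to exploit the decay $e^{-a(T-\sigma)}$; the infimum over $T$ of your bound is therefore bounded below by a positive constant determined by $a$, $b$, $L_1$ and $n$ alone (of order $b/(a(2^n-1))$ if one iterates over short horizons), whenever $b=\max_{x\in D}\dot V(x)>0$ — precisely the interesting case in which $\dot V>0$ somewhere in $\Omega$. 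Consequently the $\bar R$ your construction yields cannot be continuous at $0$ with $\bar R(0)=0$, and your closing remark that $T$ ``scales logarithmically with $1/\rho(\epsilon)$'' is incorrect: the $\epsilon$'s cancel exactly, so $T$ cannot be taken large at all. A secondary flaw: in verifying property 1 you assert that $2M_1\rho(\epsilon)e^{L_1t}$ stays below $2M_1\rho(\epsilon)$ up to a cutoff, which is false for every $t>0$; the all-time bound of property 1 needs a restarted (iterated) comparison, not a single Gronwall window. To repair the proof one must produce a comparison solution that avoids $\Omega$ altogether, or at least spends in it a total time that is $o(1)$ as $\epsilon\to0$; establishing the existence of such a trajectory is exactly the nontrivial step carried out in \cite{DL14}, and it is not supplied by your density argument.
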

The proof of Theorem \ref{thm2} is established by a perturbation argument which compares a given system trajectory with nearby trajectories that lie entirely in $D\backslash\Omega$ and trades off convergence speed of these trajectories against the expansion rate of the distance to them from the given trajectory. For more details of the proof of Theorem \ref{thm2}, please refer to \cite{DL14}. Notice that in the special case when $\dot V (x) \leq-aV (x)$ for all $x \in D$ (which implies that $\epsilon$ can be any arbitrarily small positive number), Theorem \ref{thm2} reduces to Lyapunov's classical asymptotic stability theorem. On the other hand, we cannot recover asymptotic stability from Theorem \ref{thm1} when $\text{vol}(\Omega)=0$ simply because a neighborhood of origin is taken away from $D$. At first sight one may think the main Theorem \ref{thm1} in this paper has some drawbacks as it requires extra conditions (existence of positive $\underline L_0, b$) to hold than Theorem \ref{thm2}; meanwhile, the result of Theorem \ref{thm1} seems to be weaker than that of Theorem \ref{thm2} due to the existence of gap $h$ in all three statements, which unlike $g\epsilon$ in Theorem \ref{thm1} or $R(\epsilon)$ in Theorem \ref{thm2} and does not vanish as $\epsilon$ goes to $0$. Nevertheless, we need to point out that the two $\bar \epsilon$'s in both theorems are very different; in fact the $\bar\epsilon$ in Theorem \ref{thm2} is very conservative compared with that of Theorem \ref{thm1}. In order to fulfill the condition in Theorem \ref{thm2}, we need $\text{vol}(\Omega)<\bar \epsilon$. However, we failed to construct a non-trivial example with $\dot V(x)>0$ for some $x\in D$ while maintaning that inequality. This is left as an open question in \cite{DL14}. An interesting observation is that by perturbing the system dynamics without increasing the Lipschitz constant, which is used in computing $\bar \epsilon$, an unstable equilibrium can be constructed away from the origin. There will be contradiction if Theorem \ref{thm2} is applicable to such a system because a solution starting at that unstable equilibrium will not move, contrary to what is concluded from the theorem that the solution will be attracted to a neighborhood of the origin. On the other hand, if we try to apply Theorem \ref{thm2} to the example in Section \ref{example}, through the procedure in \cite{DL14} we find that $\bar \epsilon <\pi \rho ^2$, thus Theorem \ref{thm2} is inconclusive. Hence we prefer to apply Theorem \ref{thm1} with a modified region $D$.

\section{Proof of Proposition \ref{non-overlap}: }
If a  space curve $x^*(s),s\in[0,\mathcal L]$ is closed ($x^*(0)=x^*(\mathcal L)$) and piecewise $C^2$, we set $I:=\{s\in[0,\mathcal L):\frac{d}{ds}x^*(s) \mbox{ does not exist}\}$. For each $s\in I$, we define the turning angle $\varphi_t(s)$ to be the oriented angle from the vector $\frac{d}{ds}x^*(s^-)$ (or $\frac{d}{ds}x^*(\mathcal L^-)$ if $s=0$) to the vector $\frac{d}{ds}x^*(s^+)$. Then total curvature is defined as
\[
K=\int_{s\in [0,\mathcal L)\backslash I}\kappa(s)ds+\sum_{s\in I}\varphi_t(s)
\]
In order to prove Proposition \ref{non-overlap}, two geometrical results are needed:
\begin{Lemma}[Fenchel's Theorem]
For any closed space curve $x(s)$,
\[
K\geq 2\pi
\]
and equality holds if and only if $x(s)$ is a convex planar curve.
\end{Lemma}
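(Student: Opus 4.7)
The plan is to prove Fenchel's Theorem via the classical tangent indicatrix approach. First, I would reparameterize the curve by arc length, so that $x(s)$ satisfies $|x'(s)|=1$ wherever differentiable, and introduce the tangent indicatrix $T:[0,\mathcal{L}]\to S^2$ defined by $T(s):=x'(s)$. At points where the derivative jumps, I would interpret $T$ as jumping along the great-circle geodesic from $T(s^-)$ to $T(s^+)$, an arc whose spherical length equals the turning angle $\varphi_t(s)$. Under this convention, the total curvature $K$ is exactly the spherical arc length of the (possibly piecewise) closed curve traced by $T$ on $S^2$.

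Next I would prove the key geometric fact that the tangent indicatrix $T$ of a closed curve cannot lie inside any open hemisphere. If, on the contrary, there existed a unit vector $n\in S^2$ with $T(s)\cdot n>0$ for all $s$, then integrating against the closed parameter interval would give
\[
0=(x(\mathcal{L})-x(0))\cdot n=\int_0^{\mathcal{L}} x'(s)\cdot n\,ds=\int_0^{\mathcal{L}} T(s)\cdot n\,ds>0,
\]
a contradiction. Thus $T$ meets both closed hemispheres determined by every $n\in S^2$.

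The next step is a purely spherical lemma: any closed curve on $S^2$ that is not contained in any open hemisphere has spherical length at least $2\pi$. I would prove this by choosing any two points $p,q$ on the indicatrix at spherical distance exactly $\pi$ (which exist because the image meets every closed hemisphere). The two complementary arcs from $p$ to $q$ each have length at least the distance $\pi$ between their endpoints, since a spherical geodesic minimizes distance, and summing gives $K\geq 2\pi$. Combining this with the identification of $K$ as the spherical length of $T$ yields the desired inequality.

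For the equality case, I would analyze when both inequalities are saturated: each of the two sub-arcs of $T$ from $p$ to $q$ must itself be a length-$\pi$ geodesic, i.e.\ half of a great circle. Hence the entire image of $T$ lies on a single great circle $C\subset S^2$. This forces $x'(s)$ to lie in a fixed $2$-plane through the origin for all $s$, so $x$ is planar; convexity then follows because a planar closed curve whose tangent indicatrix sweeps the great circle monotonically of total variation exactly $2\pi$ is convex. The main obstacle I anticipate is the spherical-length lemma (establishing $\geq 2\pi$ cleanly for a curve that may be only piecewise smooth), and the careful verification of the rigidity in the equality case; the rest of the argument is essentially bookkeeping once the indicatrix framework is in place.
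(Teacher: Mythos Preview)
The paper does not prove Fenchel's Theorem; it is quoted as a classical result and then applied in the proof of Proposition~\ref{non-overlap}. So there is no argument in the paper to compare against.

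Your tangent-indicatrix strategy is the standard route, and both the identification of $K$ with the spherical length of $T$ and the hemisphere contradiction via $\int_0^{\mathcal L}T(s)\cdot n\,ds=0$ are correct. The gap is precisely where you anticipated, in the spherical-length lemma. You assert that points $p,q$ on the indicatrix at spherical distance exactly $\pi$ exist ``because the image meets every closed hemisphere,'' but this is not a justification: intersecting every great circle is not obviously the same as containing an antipodal pair, and you offer no argument linking the two. Even if such a pair does exist, extracting it from the hemisphere condition is at least as hard as the lemma you are trying to prove, so the step as written is a genuine hole rather than a routine omission.

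The standard argument (Horn's lemma) bypasses antipodal points altogether: choose $p,q$ to \emph{bisect} the indicatrix into two arcs each of length $K/2$, and let $m$ be the midpoint of the geodesic segment $pq$. If $K<2\pi$ and one arc reached a point $x$ with $d(x,m)=\pi/2$, reflecting that arc through $m$ would produce a closed curve of length $K<2\pi$ passing through the antipodal pair $x,-x$, hence of length $\geq 2\pi$, a contradiction. Thus the indicatrix lies in the open hemisphere about $m$, contradicting your hemisphere step. With this substitution your outline, including the equality case, goes through.
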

\begin{Lemma}[Schur's Comparison Theorem]
Suppose $C(s)$ is a plane curve with curvature $\kappa(s)$ which makes a convex curve when closed by the chord connecting its endpoints, and $C^*(s)$ is an arbitrary space curve of the same length with curvature $\kappa^*(s)$. Let $d$ be the distance between the endpoints of $C$ and $d^*$ be the distance between the endpoints of $C^*$. If $\kappa^*(s) \leq \kappa(s)$ then $d^* \geq d$.
\end{Lemma}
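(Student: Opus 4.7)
The plan is to prove Schur's comparison theorem by the classical chord-projection argument. The first step is to set up convenient coordinates for $C$: rotate and translate the plane so that the chord $C(L)-C(0)$ lies along the positive $x$-axis with $C$ in the upper half-plane (possible by convexity). Rolle's theorem applied to the $y$-coordinate of $C$ supplies a point $s_0 \in (0,L)$ where the tangent $T(s_0)$ is parallel to the chord. Let $\theta(s)$ denote the angle from the positive $x$-axis to $T(s)$, so $\theta(s_0)=0$. Convexity of $C$ closed by its chord (rotation index $2\pi$, with the two corner exterior angles absorbing any excess) forces $\theta$ to be monotonic along $C$ and bounded: $|\theta(s)| \leq \pi$. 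In these coordinates the chord length is simply
$$d = e_1 \cdot (C(L)-C(0)) = \int_0^L \cos\theta(s)\, ds.$$

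Next, I rotate $C^*$ (without loss of generality, since neither $d^*$ nor $\kappa^*$ changes) so that $T^*(s_0) = e_1$, and define $\phi^*(s) \in [0,\pi]$ to be the angle between $T^*(s)$ and $e_1$. By Cauchy-Schwarz applied to the chord vector,
$$d^* = \left| \int_0^L T^*(s)\, ds \right| \geq e_1 \cdot \int_0^L T^*(s)\, ds = \int_0^L \cos\phi^*(s)\, ds.$$
The curvature hypothesis enters through a spherical-distance bound: the tangent image $s \mapsto T^*(s)$ is a curve on the unit sphere whose arc length between $T^*(s_0)$ and $T^*(s)$ is exactly $\bigl|\int_{s_0}^s \kappa^*(\sigma)\, d\sigma\bigr|$, and the geodesic (angular) distance between two points on the sphere is bounded by the arc length of any connecting curve. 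Combining with $\kappa^* \leq \kappa$ and the monotonicity of $\theta$,
$$\phi^*(s) \;\leq\; \left|\int_{s_0}^s \kappa^*(\sigma)\, d\sigma\right| \;\leq\; \left|\int_{s_0}^s \kappa(\sigma)\, d\sigma\right| \;=\; |\theta(s)|.$$

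To conclude, since $0 \leq \phi^*(s) \leq |\theta(s)| \leq \pi$ and $\cos$ is nonincreasing on $[0,\pi]$, pointwise $\cos\phi^*(s) \geq \cos|\theta(s)| = \cos\theta(s)$, so
$$d^* \;\geq\; \int_0^L \cos\phi^*(s)\, ds \;\geq\; \int_0^L \cos\theta(s)\, ds \;=\; d.$$
The main obstacle I expect is the geometric bookkeeping behind the consequences of convexity, namely the existence of the reference point $s_0$, the monotonicity of $\theta$, and the uniform bound $|\theta(s)|\leq \pi$. These should follow from the rotation-index identity for a simple closed convex plane curve together with a careful accounting of the exterior angles at the two corners where $C$ meets the chord. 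One technical subtlety, handled similarly to the remark on Rademacher's theorem in the body of the paper, is that Fenchel's piecewise-$C^2$ setup means the spherical arc-length estimate must be applied on each smooth piece and summed together with the corner turning angles; once this accounting is in place, the rest is the routine projection-plus-spherical-distance estimate above.
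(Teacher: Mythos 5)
Your proposal is correct, but there is nothing in the paper to compare it against: the paper does not prove Schur's Comparison Theorem at all, it simply quotes it (together with Fenchel's Theorem) as a classical result with a citation to the literature and uses it as a black box in the proof of Proposition~\ref{non-overlap}. What you have written is essentially the standard Chern-style proof: pick the point $s_0$ on the convex arc where the tangent is parallel to the chord, use monotonicity of the tangent angle $\theta$ and the rotation-index/exterior-angle accounting to get $|\theta(s)|\leq\pi$ with $\theta(s_0)=0$, bound the angle $\phi^*(s)$ between $T^*(s)$ and $T^*(s_0)$ by the spherical arc length $\bigl|\int_{s_0}^{s}\kappa^*\bigr|\leq\bigl|\int_{s_0}^{s}\kappa\bigr|=|\theta(s)|$, and finish by projecting the chord of $C^*$ onto $T^*(s_0)$ and comparing $\cos\phi^*\geq\cos\theta$ pointwise. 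All the inequalities run in the right direction and the curvature-comparison step is used exactly where it must be. Two small points of care: the Rolle argument only gives a horizontal tangent, which in degenerate convex configurations could point opposite to the chord; it is cleaner to obtain $s_0$ with $T(s_0)$ equal to the unit chord direction directly from the intermediate value theorem applied to the monotone angle $\theta$, whose range the corner-angle bookkeeping confines to $[-\pi,\pi]$. Also, the piecewise-$C^2$/corner accounting you worry about is really only needed for Fenchel's Theorem as used in Proposition~\ref{non-overlap}; for Schur's statement itself the arcs can be taken smooth (or the curvature integrals read as total variation of the tangent), so that complication does not enter your argument.
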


%

Suppose self-overlapping occurs between $N_{\rho_0}(x(t))$ and $N_{\rho_0}(x(s))$ for some $t>s$. We prove the proposition by showing that contradictions arise if $\mathcal L_s^t<2 \rho\left(\pi-\sin^{-1}(\frac{\rho_0}{\rho})\right)$.

Rewrite $\mathcal L_s^t=2\rho\theta$ for some $\theta\in\left(0,\pi-\sin^{-1}(\frac{\rho_0}{\rho})\right)$. Let $z\in N_{\rho_0}(x(t))\cap N_{\rho_0}(x(s))$. Denote the angle between vector $\ovec{z x(t)}$ and vector $\ovec{z x(s)}$  by $\phi_z$. Notice that the curve $x(\tau)$ over $[s,t]$ and the two vectors $\ovec{z x(t)},\ovec{z x(s)}$ form a closed curve. Evaluating the total curvature alone this closed curve and applying Fenchel's Theorem and realizing that the turning angles at $x(t)$, $x(s)$ are both $\frac{\pi}{2}$ because they are on the normal disks, and the fact that the turning angle at $z$ is the complement of $\phi_z$, we have

\begin{align*}
2\pi\leq K&=\left(\int_{x(t)}^{x(s)}\kappa(x)dx\right)+ \varphi_t(x(s))+\varphi_t(x(t))+\varphi_t(z)\\
&\leq \int_{x(t)}^{x(s)}\frac{1}{\rho}dx+ \varphi_t(x(s))+\varphi_t(x(t))+\varphi_t(z)\\
&= \frac{\mathcal L_s^t}{\rho}+\frac{\pi}{2}+\frac{\pi}{2}+(\pi-\phi_z).\\
\end{align*}
Therefore
\begin{equation}\label{phi&theta}
\phi_z\leq 2\theta.
\end{equation}

\noindent
Now we establish the contradiction in 3 different cases, based on the value of $\theta$: \\

\noindent
{\bf Case 1.} $\theta<\frac{\pi}{4}$. Notice that because $f(x(t)),f(x(s))$ are normal vectors of $N_{\rho_0}(x(t))$, $N_{\rho_0}(x(s))$, the angle between them is the same as the dihedral angle
between the two hyperplanes that contain the two normal disks, which is the maximal value of $\phi_z$ over all possible $z$ along the intersection of the two hyperplanes. Because \eqref{phi&theta} always holds for such $\phi_z$, it also holds for the maximum, hence in this case the angle between $f(x(t))$ and $f(x(s))$ is acute. Now because $\rho_0<\rho$, the velocity of each point on the normal disk $N(x(\cdot))$ is in the same direction as $ f(x(\cdot))$ when $N(x(\cdot))$ ``sweeps" with respect to time. Thus renaming $t$ by $\tau$ and using the earlier result of acute angle between $f(x(\tau))$ and $f(x(s))$, we see that the velocity of each point on the normal disk $N(x(\tau))$ has positive component in the $f(x(s))$ direction for all $\tau\in[s,t]$. In other words, the disk $N(x(t))$ moves away from $N(x(s))$ so self-overlapping is impossible. \\

\noindent
{\bf Case 2.} $\theta\in[\frac{\pi}{4},\frac{\pi}{2})$. In this case, compare the solution $x(\cdot)$ to a circular arc with constant curvature $\frac{1}{\rho}$ and same arc length of $2\rho\theta$. Notice that such a circular arc has central angle $2\theta$ and therefore the chord length is $2\rho\sin\theta$. By Schur's Comparsion Theorem,
\[
|x(t)-x(s)|\geq 2\rho\sin\theta\geq \sqrt{2}\rho.
\]
In addition, $z\in N_{\rho_0}(x(t))\cap N_{\rho_0}(x(s))$ means $|z-x(t)|\leq \rho_0<\rho,|z-x(s)|\leq \rho_0<\rho$. Thus $|z-x(t)|^2+|z-x(s)|^2<2\rho^2\leq |x(t)-x(s)|^2$, which not only means that $\phi_z$ is obtuse, but also implies that
\begin{align*}
\cos \phi_z=&\frac{|z-x(t)|^2+|z-x(s)|^2- |x(t)-x(s)|^2}{2|z-x(t)||z-x(s)|}\\
< \,\,  & \frac{\rho^2+\rho^2-(2\rho\sin\theta)^2}{2\rho^2}\\
=&\cos 2\theta.
\end{align*}
Hence $\phi_z>2\theta$, contradicting \eqref{phi&theta} so self-overlapping is impossible in this case. \\

\noindent
{\bf Case 3.} $\theta\in[\frac{\pi}{2},\pi-\sin^{-1}(\frac{\rho_0}{\rho}))$. In this case we repeat the same procedure of comparing the solution $x(\cdot)$ to a circular arc. Again Schur's Comparison Theorem tells us that
\[
|x(t)-x(s)|\geq 2\rho\sin\theta >2\rho\sin(\pi-\sin^{-1}(\frac{\rho_0}{\rho}))=2\rho_0.
\]
Because $x(t)$ and $x(s)$ are separated by more than $2\rho_0$, self-overlapping is impossible.

\end{document}